\theoremstyle{plain}
\newtheorem{theorem}{Theorem}
\newtheorem{lemma}[theorem]{Lemma}
\newtheorem{remark}[theorem]{Remark}
\newcommand\T{T}
\newcommand\R{\mathbb R}
\newcommand\bq{\boldsymbol q}
\newcommand\br{\boldsymbol r}
\newcommand\s{{\boldsymbol s}}
\newcommand\ba{{\boldsymbol a}}
\newcommand\bJ{{\boldsymbol J}}
\newcommand\bw{{\boldsymbol w}}
\newcommand\bx{{\boldsymbol x}}
\newcommand\bX{{\boldsymbol X}}
\newcommand\bY{{\boldsymbol Y}}
\newcommand\balpha{{\boldsymbol\alpha}}
\newcommand\bnu{{\boldsymbol\nu}}
\newcommand{\D}{\mathcal{D}} 
\newcommand{\tube}{\mathcal{N}} 
\newcommand{\trih}{\mathcal{T}_h}
\newcommand{\intg}{\int_\Gamma}
\newcommand{\intgh}{\int_{\Gamma_h}}
\newcommand{\dD}{\underline{D}}
\newcommand\Ho{\mathaccent23 H^1(\Gamma)}
\def\diag{\operatorname{diag}}
\def\diam{\operatorname{diam}}
\def\dist{\operatorname{dist}}
\def\supp{\operatorname{supp}}
\def\tr{\operatorname{tr}}
\def\lap{\operatorname{\Delta}}
\def\grad{\operatorname{\operatorname{\nabla}}}
\def\bgrad{\operatorname{\boldsymbol{\operatorname{\nabla}}}}
\def\bgradg{\operatorname{\boldsymbol{\nabla}_\Gamma}}
\def\bgradgh{\operatorname{\boldsymbol{\operatorname{\nabla}}_{\Gamma_h}}}
\def\Hessian{\operatorname{\operatorname{\nabla_\Gamma^2}}}
\newcommand\ipd[2]{\partial#1/\partial #2}
\definecolor{bluegreen}{rgb}{0,0.75,0.75}
\begin{document}
\title[Finite elements for the Laplace--Beltrami equation using graded meshes]{Analysis of the finite element method for the Laplace--Beltrami equation on surfaces with regions of high curvature using graded meshes}

\author{Johnny Guzman, Alexandre Madureira, Marcus Sarkis and Shawn Walker}

\begin{abstract}
We derive error estimates for the piecewise linear finite element approximation of the Laplace--Beltrami operator on a bounded, orientable, $C^3$, surface without boundary on general shape regular meshes.  As an application, we consider a problem where the domain is split into two regions: one which has relatively high curvature and one that has low curvature.   Using a graded mesh we prove error estimates that do not depend on the curvature on the high curvature region. Numerical experiments are provided.
\end{abstract}

\date{May 10, 2016}
\maketitle

\section{Introduction}
Since the publication of the seminal paper~\cite{MR976234}, there has been a growing interest in the discretization of surface partial differential equations (PDEs) using finite element methods (FEMs). Such interest is motivated by important applications related to physical and biological phenomena, and also by the potential use of numerical methods to answer theoretical questions in geometry~\cites{MR976234,MR3038698,MR3486164}.

In this paper, we focus on linear finite element methods for the Poisson problem with the Laplace--Beltrami operator on $\Gamma\subset\R^3$, a $C^3$ two-dimensional compact orientable surface without boundary. That is, we consider
\begin{equation*}
-\lap_\Gamma u=f\quad\text{on }\Gamma.
\end{equation*}
In order to motivate the results in our paper, we start by giving a short overview of previous results. A piecewise linear finite element method is proposed and analyzed in~\cites{MR976234,MR3038698}. The basic idea is to consider a piecewise linear approximation of the surface, and pose a finite element method over the discretized surface. Discretizing the surface, of course, creates a \emph{geometric error}, however, the advantage is that for a given discretization a surface parametrization is not necessary. 


In~\cite{MR2485433} a generalization of the piecewise linear FEM is considered, based on higher order polynomials that approximate both the geometry and the PDE; the same paper proposes a variant of the method which employs parametric elements, and the method is posed on the surface,  originating thus no geometric error. Discontinuous Galerkin schemes are considered in~\cites{MR3338674,MR3081490}, and HDG and mixed versions are considered in~\cite{MR3522964}. Adaptive schemes are presented in~\cites{Bonito,MR3447136,MR2285862,MR2970758}. An alternative approach, where a discretization of an outer domain induces the finite element spaces is proposed  in~\cite{MR2551197,MR2570076}. See also~\cites{MR3312662,MR3345245,MR2970758,MR3215065,MR3194806}. In~\cite{MR1868103,MR2608464,MR3043557,MR3471100}, the PDE itself is extended to a neighborhood of the surface before discretization. 

Other problems and methods were considered as well, as a multiscale FEM for PDEs posed on rough surface~\cite{MR3053884}, and stabilized methods~\cites{MR3312662,hanslar,MR3371354,MR3194806}. In~\cite{MR2915563} the finite element exterior calculus framework was considered.
Finally, transient and nonlinear PDEs were also subject of consideration, as reviewed in~\cite{MR3038698}. 

A common ground between all aforementioned papers is that the \emph{a-priori} choices of the surface discretization do not consider how to locally refine the mesh following some optimality criterion. It is however reasonable to expect that some geometrical traits, as the curvatures, have a local influence on the solution, and thus the mesh refinement could account for that locally. Not surprisingly, numerical tests using adaptive schemes confirm that high curvature regions require refined meshes~\cites{MR3447136,Bonito}. This is no different from problems in nonconvex \emph{flat} domains, where corner singularities arise, and meshes are used to tame the singularity at an optimal cost~\cites{MR0502065,MR0502067}.

As far as we can tell, the development and analysis of \emph{a-priori} strategies to deal with high curvature regions have not been an object of investigation, so far.  In this paper we consider a simple setting: We suppose that the domain $\Gamma=\Gamma_1\cup\Gamma_2$, and assume that the maximum of the principal curvatures in $\Gamma_1$ is much larger than those in $\Gamma_2$. We then seek  a graded mesh that gives us optimal error estimate. Of course, in the region $\Gamma_1$ the triangles will be much smaller than the mesh size in regions far from $\Gamma_1$.  We consider the method originally proposed by Dziuk~\cite{MR976234}.

To carry out the analysis, we first need to track the geometric constants carefully. This, as far as we can tell, has not explicitly appeared in literature, although it is not a difficult task. We do this by following~\cites{MR976234,MR3038698} although in some cases we give different arguments while trying to be as precise as possible. The estimate we obtain is found below in~\eqref{apriori}. If $u_h^\ell$ is the finite element solution approximation to $u$ then the result reads (see sections below for precise notation):
\begin{equation*}
  \|\bgradg(u-u^\ell_h)\|_{L^2(\Gamma)}
  \le Cc_p[(\Lambda_h+\Psi_h)\|f\|_{L^2(\Gamma)}
  +\|f-f_h^\ell\|_{L^2(\Gamma)}]
  +C\biggl(\sum_{T\in\T_h}h_T^2\|\Hessian u\|_{L^2(T^\ell)}^2\biggr)^{1/2}.
\end{equation*}
Here $f_h^\ell$ is an approximation to $f$, $c_P$ is the Poincar\'e's constant, and the numbers $\Lambda_h$, $\Psi_h$ are geometric quantities. For instance, $\Psi_h=\max_T\kappa_T^2h_T^2 $ where $h_T$ is the diameter of the triangle $T$ and $\kappa_T$ measures the maximum principal curvatures on $T^\ell$ ($T^\ell$ is the surface triangle corresponding to $T$; see sections below).  The important point here is that  $\Lambda_h+\Psi_h$ can be controlled locally. That is, if one wants to reach a certain tolerance, one needs to make $h_T$ small enough only depending on the geometry in $T^\ell$.

On the other hand, $\|\Hessian u\|_{L^2(T^\ell)}$ does not depend only on the local geometry. In order to deal with this term, in the case of two sub-regions, we prove local $H^2$ regularity results. Combining the local regularity and the a-priori error estimate~\eqref{apriori}  we are able to define a mesh grading strategy and prove Theorem~\ref{t:meshdistrib}. The error estimate contained in Theorem~\ref{t:meshdistrib} is independent of the curvature in region $\Gamma_1$, and in some sense is the best error estimate one can hope for given the available information.

The paper is organized as follows. In Section~\ref{s:prelim} we set the notation and derive several fundamental estimates highlighting the influence of the curvatures. Section~\ref{s:fe} regards the finite element and interpolation approximations. Finally, we present in Section~\ref{s:mesh} a local $H^2$ estimate and a mesh grading scheme culminating in an error estimate that is independent of the ``bad'' curvature. The paper ends with numerical results in Section~\ref{s:numerics}.


\section{Preliminaries}\label{s:prelim}
As mentioned above, we assume that $\Gamma$ is bounded, orientable, $C^3$ surface without boundary.   Furthermore, we assume that there is a high curvature region $\Gamma_1\subsetneq\Gamma$, and define $\Gamma_2=\Gamma\backslash\overline{\Gamma_1}$. For $f\in L^2(\Gamma)$ with $\intg f\,dA=0$, let $u\in\Ho$ be such that
\begin{equation}\label{e:contin_laplace_beltrami}
  \intg\bgradg u\cdot\bgradg v\,dA=\intg fv\,dA\qquad\text{for all }v\in\Ho,
\end{equation}
where $\Ho=\{v\in H^1(\Gamma):\,\intg v\,dA=0\}$. We denote by $\bgradg$ the tangential gradient~\cite{MR3486164}, and~\eqref{e:contin_laplace_beltrami} is nothing but the weak formulation of the Poisson problem for the Laplace--Beltrami operator. Existence and uniqueness of solution follows easily from the Poincar\'e's inequality (Lemma~\ref{l:poincare}) and the Lax--Milgram theorem. Details on
the definition of $\bgradg v$ are given below. 
Consider now a triangulation $\Gamma_h$ of the surface $\Gamma$. By that we mean that $\Gamma_h$ is a two-dimensional compact orientable polyhedral $C^0$ surface, and denoting by $\trih$ the set of closed nonempty triangles such that $\cup_{T\in\trih}T=\Gamma_h$, we assume that all vertices belong to $\Gamma$, and that any two triangles have as intersection either the empty set, a vertex or an edge. Let $h_T=\diam(T)$ and  $h=\max\{h_T:\,T\in\trih\}$. For all $T\in\trih$ assume that there is a three-dimensional neighborhood $\tube_T$ of $T$ where for every $\bx\in\tube_T$ there is a unique closest point $\ba(\bx)\in\Gamma$ (see Figure~\ref{fig:Closest_Point_Diagram}) such that
\begin{equation}\label{e:closest_pt_map}
  \bx=\ba(\bx)+d(\bx)\bnu(\bx),
\end{equation}
$d(\bx)$ is the \emph{signed distance} function from $\bx$ to $\Gamma$ and $\bnu(\ba(\bx))$ is the unit normal to $\Gamma$ at $\ba(\bx)$, that is, $\bnu(\ba(\bx))=(\bgrad d(\bx))^t$; with an abuse of notation, we define $\bnu(\bx)=\bnu(\ba(\bx))$ for $\bx\in\tube_T$. Note that by using \emph{local} tubular neighborhoods, we avoid unnecessary restrictions on the mesh size.

 Here we would like to explain some notational conventions that we use. From now on, the gradient of a scalar function will be a row vector. The normal vector $\bnu$ is a column vector (as well as $\bnu_h$ which is defined below).

\begin{figure}
\begin{center}
\includegraphics[width=4.0in]{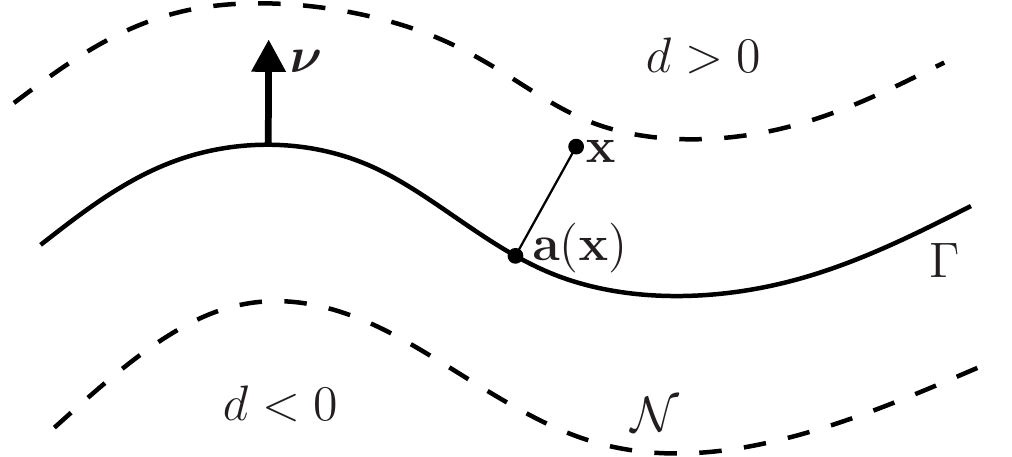}
\caption{Diagram of closest point map~\eqref{e:closest_pt_map}. The exact surface is denoted by $\Gamma$ which is contained in the ``tubular'' neighborhood $\tube$.  Since $d$ is the signed distance function to $\Gamma$, the zero level set of $d$ coincides with $\Gamma$.}
\label{fig:Closest_Point_Diagram}
\end{center}
\end{figure}

We can now define, for every $T\in\trih$, the surface triangle  $T^\ell=\{\ba(\bx):\,\bx\in T\}$. Then $\Gamma=\cup_{T\in\trih}T^\ell$. Let $H(\ba)=\bgradg\bnu(\ba)$ be the self-adjoint operator corresponding to the derivative of the Gauss map~\cite{MR0394451}. Since $\bgradg\bnu=\grad\bnu$, then $H(\ba(\bx))=\grad^2d(\bx)$. And again we set $H(\bx)=H(\ba(\bx))$ for $\bx\in\tube_T$.

At the estimates that follow in this paper we denote by $C$ a generic constant that might not assume the same value at all occurrences, but that does not depend on $h_T$, $u$, $f$ or on $\Gamma$. It might however depend for instance on the shape regularity of $T\in\trih$.

Given $T\in\trih$, let 
\begin{equation}\label{kappa}
\kappa_T=\|H\|_{L^\infty(T)}:=\max_{ij}\|H_{ij}\|_{L^\infty(T)}, 
\end{equation}
and $\bnu_h\in\R^3$ be unit-normal vector to $T$ such that $\bnu_h\cdot\bnu>0$. We note that since $H$ is symmetric, $\kappa_T$ is also equivalent 
to the $L^\infty$ norm of the spectral radius of $H$, or $\max\{|k_1|_{L^\infty(T)},|k_2|_{L^\infty(T)}\}$ where the $k_i$ are the principal curvatures. 

{\bf Assumption}: { \it Throughout the paper we will assume
\begin{equation}\label{suff}
h_T^2 \kappa_T^2 \le c_1 < 1\quad\text{for all }T\in\trih,
\end{equation}
where $c_1$ is sufficiently small.}

It is easy to see that 
\begin{gather}
\|d\|_{L^\infty(T)}\le Ch_T^2\kappa_T,\label{e:d}
\\
\|\bnu-\bnu_h\|_{L^\infty(T)}\le Ch_T\kappa_T.\label{e:nu}
\end{gather}
To show~\eqref{e:d}, assume without loss of generality that $T\subset\R^2\times\{0\}$ and that $\nu_3$, $\nu_{3,h}>0$. Let $I_h d$ be the Lagrange linear interpolant of $d$ in $T$. Since $d$ vanishes at the vertices, $I_hd\equiv0$. By~\cite{MR2050138} we have 
\begin{equation}\label{e:dest}
\|d\|_{L^\infty(T)}+h_T\|\ipd d{x_i}\|_{L^\infty(T)}
=\|d-I_hd\|_{L^\infty(T)}+h_T\|\ipd{(d-I_hd)}{x_i}\|_{L^\infty(T)}
\le Ch_T^2\|\grad^2d\|_{L^\infty(T)},
\end{equation}
for $i=1,2$. Next, to prove~\eqref{e:nu}, start by noting that $\bnu^t_h=(0,0,1)$, and then the estimate for the first two components $\|\nu_i\|_{L^\infty(T)}=\|\ipd d{x_i}\|_{L^\infty(T)}$ of  $\bnu$ follow from~\eqref{e:dest} and \eqref{kappa}. The third component estimate follows from
\begin{multline*}
\|\nu_3-1\|_{L^\infty(T)}\le\|(\nu_3-1)(\nu_3+1)\|_{L^\infty(T)}
=\|\nu_3^2-1\|_{L^\infty(T)}\le\|\nu_1\|_{L^\infty(T)}^2+\|\nu_2\|_{L^\infty(T)}^2
\\
\le Ch_T^2 \kappa_T^2 \le Ch_T \kappa_T. 
\end{multline*}
Here we used~\eqref{suff}.

From~\eqref{e:d} and~\eqref{kappa} we see that
\begin{equation}\label{e:dH}
\|d H\|_{L^\infty(T)} \le C h_T^2 \kappa_T^2.
\end{equation}

Therefore,  making $c_1$  sufficiently small  in \eqref{suff} so that the eigenvalues of  $d(\bx)H(\bx)$ are smaller or equal to $1/2$ for every $\bx \in \Gamma_h$, then we will have
\begin{equation}\label{e:inv}
\|(I-d H)^{-1}\|_{L^\infty(T)}\le C.
\end{equation}

We  define tangential projections onto $\Gamma$ and $\Gamma_h$, respectively, as $P=I-\bnu\otimes\bnu$  and $P_h=I-\bnu_h\otimes\bnu_h$, where $\bq\otimes\br=\bq\br^t$ for two column vectors $\bq$ and $\br$. We recall that the tangential derivatives for a functions defined on a neighborhood of $\Gamma$ (or $\Gamma_h$) are given by
\begin{equation}\label{tanP}
\bgradg v= (\grad v) P, \qquad \bgradgh v=(\grad v)P_h. 
\end{equation}

By using that $\bnu\cdot\bnu=1$, we have
\begin{equation}\label{e:Hnu}
0=\frac12\grad(\bnu\cdot\bnu)=(\grad\bnu)\bnu=H(\bx)\bnu(\bx)\text{ for all }\bx\in T, \end{equation}
Hence, we, of course, have 
\begin{equation}\label{PH}
P H=H=HP,
\end{equation}
which we use repeatedly. Also, we can show that
\begin{equation}\label{e:Hinv}
P(I-d H)^{-1} \bnu=0.
\end{equation}
Indeed, $\bnu=(I-d H)\bnu$ by~\eqref{e:Hnu} and so $P(I-d H)^{-1}\bnu=P(I-d H)^{-1}(I-dH)\bnu=P \bnu=0$.

\subsection{Local parametrization}
Let $\hat T=\{(\theta_1,\theta_2):\,0\le\theta_1,\theta_2\le1,0\le\theta_1+\theta_2\le1\}$ be the reference triangle. Fix $T\in\trih$, let $\bx_0$ be one of the vertices, and let $\bx_1$ and $\bx_2$ be vectors in $\R^3$ representing two edges of $T$ (i.e. $T=\{\bx_0+\theta_1\bx_1+\theta_2\bx_2:\,0\le\theta_1,\theta_2\le1,0\le\theta_1+\theta_2\le1\}$). Let $\bX:\hat T\rightarrow T$ be given by  $\bX(\theta_1,\theta_2)=\bx_0+\theta_1\bx_1+\theta_2\bx_2$. We also define $\bY:\hat T\rightarrow T^\ell$ by $\bY(\theta_1,\theta_2)=\ba(\bX(\theta_1,\theta_2))$. Since $\grad\bX=[\bx_1,\bx_2]$ we have $(\grad\bX)^t\bnu_h=0$. From the definition of $\ba$ we have 
\begin{equation*}
\grad \ba(\bx)= P(\bx)- d(\bx) H(\bx),
\end{equation*}
and, hence
\begin{equation}\label{e:YX}
\grad \bY= (P- d H) \grad X.
\end{equation}
Therefore, using that $P$ and $H$ are symmetric and \eqref{e:Hnu} we have $(\grad \bY)^t \bnu= 0$. Collecting the two results we have
\begin{equation}\label{e:XY}
(\grad \bX)^t\bnu_h=0, \qquad (\nabla \bY)^t \bnu=0.
\end{equation}

Given a function $\eta\in L^1(T^\ell)$ we define the pullback lift $\eta_\ell\in L^1(T)$ as
\begin{equation*}
\eta_{\ell}(\bx)=\eta(\ba(\bx)),
\end{equation*}
and for $\eta\in L^1(T)$ we define the push-forward lift $\eta^\ell\in L^1(T^\ell)$ as
\begin{equation}\label{e:lift}
\eta^{\ell}(\ba(\bx))=\eta(\bx),
\end{equation}
and associate $\hat{\eta}:\hat{T}\rightarrow\R$ defined by
\begin{equation*}
\hat{\eta}(\theta_1, \theta_2)=\eta(\bX(\theta_1, \theta_2))=\eta^\ell(\bY(\theta_1, \theta_2)).
\end{equation*}
Note that $(\eta_\ell)^\ell = \eta$ for $\eta\in L^1(T^\ell)$
and $(\eta^\ell)_\ell = \eta$ for  $\eta \in L^1(T)$.

Consider also the metric tensors
\begin{equation*}
  G_{\bX}(\theta_1,\theta_2)
  =\bigl(\grad \bX(\theta_1,\theta_2)\bigr)^t\grad \bX(\theta_1,\theta_2),
  \qquad
  G_{\bY}(\theta_1, \theta_2)
  =\bigl(\grad \bY(\theta_1, \theta_2)\bigr)^t\grad\bY(\theta_1,\theta_2).
\end{equation*}
From the definition of tangential derivative it is possible to show~\cite{MR3486164}*{Section 4.2.1} (see also (2.2) in~\cite{MR3038698}) that for a function $\eta:\Gamma_h\to\R$,
\begin{equation*}
\bgradgh\eta(\bX)=\bgrad\hat\eta G_\bX^{-1}\grad \bX^t,
\qquad
\bgradg\eta^\ell(\bY)=\bgrad\hat\eta G_\bY^{-1}\grad\bY^t,
\end{equation*}
and multiplying by $\grad\bX$ and $\grad\bY$ we gather that 
\begin{equation}
\bgrad\hat\eta=\bgradgh\eta(\bX)\grad \bX,
\qquad
\bgrad\hat\eta=\bgradg\eta^\ell(\bY)\grad \bY.
\end{equation}
Hence,
\begin{gather}
  \bgradgh\eta(\bX)
  =\bgradg\eta^\ell(\bY)\grad \bY G_\bX^{-1}\grad \bX^t,\label{inq105}
\\
\bgradg\eta^\ell(\bY)
=\bgradgh\eta(\bX)\grad \bX G_\bY^{-1}\grad \bY^t.\label{inq106}
\end{gather}

Note that we can also write
\begin{equation}\label{e:PPh}
  P=\grad\bY G_\bY^{-1}\grad\bY^t, \qquad
  P_h=\grad\bX G_\bX^{-1}\grad\bX^t.
\end{equation}
To see that this is the case, note first from~\eqref{e:XY} that $\grad\bY G_\bY^{-1}\grad\bY^t\bnu=0$. Next, consider for $\epsilon>0$ an arbitrary differentiable curve $\s:(-\epsilon,\epsilon)\to\hat T$ and $\balpha(t)=\bY(\s(t))$. Then
\[
\grad\bY G_\bY^{-1}\grad\bY^t\balpha'
=\grad\bY G_\bY^{-1}\grad\bY^t\grad\bY\s'=\balpha'=P\balpha',
\]
since $\balpha'$ is tangent to $\Gamma$. The same arguments hold for the identity regarding $P_h$.

The following identities have appeared in the literature under different forms; see for example~\cites{MR2485433,MR2285862}. Again, we give a proof for completeness and to show the independence of $C$ with respect to $\Gamma$.
\begin{lemma}\label{l:R_Q_bound}
Let $\eta:\Gamma_h\to\R$ be differentiable, and define its forward lift $\eta^\ell$ as in~\eqref{e:lift}. It then holds that
\begin{equation}\label{inq1}
  \bgradgh\eta=(\bgradg\eta^\ell\circ\ba)\,Q\quad\text{on }\Gamma_h,
\end{equation}
where $Q=(I-dH)P_h$, and
\begin{equation}\label{inq2}
\bgradg\eta^\ell\circ\ba=(\bgradgh\eta) R\quad\text{on }\Gamma_h,
\end{equation}
where $R=\bigl[I-\frac{(\bnu_h-\bnu)\otimes(\bnu-\bnu_h)}{\bnu_h\cdot\bnu}\bigr](I-dH)^{-1}P$. Moreover, there exists a constant $C$ such that
\begin{equation}\label{e:inqRQ}
\|Q\|_{L^\infty(T)}+\|R\|_{L^\infty(T)} \le C.
\end{equation}
\end{lemma}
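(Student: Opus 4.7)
My plan is to read off both identities from the parametrization formulas \eqref{inq105}--\eqref{inq106} and then control $Q$ and $R$ using the geometric estimates \eqref{e:d}, \eqref{e:nu}, \eqref{e:dH} and \eqref{e:inv} together with \eqref{suff}. The three algebraic tools I will lean on throughout are: first, $P-dH = P(I-dH) = (I-dH)P$, which follows from $PH=HP=H$ in \eqref{PH}; second, the tangentiality relations $(\bgradg\eta^\ell)\bnu=0$ and $(\bgradgh\eta)\bnu_h=0$; and third, the identity $\bnu^t(I-dH)^{-1}P=0$, which is the transpose of \eqref{e:Hinv} (using that both $H$ and $P$ are symmetric).

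For \eqref{inq1}, combining \eqref{e:YX} with \eqref{e:PPh} gives $\grad\bY\,G_\bX^{-1}\grad\bX^t=(P-dH)P_h=P(I-dH)P_h$. Plugging this into \eqref{inq105} and using $(\bgradg\eta^\ell)P=\bgradg\eta^\ell$ to absorb the leading $P$ yields $\bgradgh\eta=(\bgradg\eta^\ell\circ\ba)(I-dH)P_h$, which is \eqref{inq1} with $Q=(I-dH)P_h$.

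The identity \eqref{inq2} is where the real work sits, because it requires inverting $\grad\bY=(I-dH)P\grad\bX$. Using \eqref{e:inv} one first obtains $P\grad\bX=(I-dH)^{-1}\grad\bY$. To recover $\grad\bX$ from $P\grad\bX$ I would exploit $\bnu_h^t(\grad\bX)=0$ from \eqref{e:XY}: a direct check shows that $\bigl(I-\tfrac{\bnu\bnu_h^t}{\bnu_h\cdot\bnu}\bigr)(P\bv)=\bv$ whenever $\bnu_h^t\bv=0$, so $\grad\bX=\bigl(I-\tfrac{\bnu\bnu_h^t}{\bnu_h\cdot\bnu}\bigr)(I-dH)^{-1}\grad\bY$. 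Substituting into \eqref{inq106} and using $P=\grad\bY G_\bY^{-1}\grad\bY^t$ from \eqref{e:PPh} produces $\bgradg\eta^\ell\circ\ba=(\bgradgh\eta)N$ with $N:=\bigl(I-\tfrac{\bnu\bnu_h^t}{\bnu_h\cdot\bnu}\bigr)(I-dH)^{-1}P$. The last step is to verify that $(\bgradgh\eta)N=(\bgradgh\eta)R$: rewriting $R=\bigl[I+\tfrac{(\bnu_h-\bnu)(\bnu_h-\bnu)^t}{\bnu_h\cdot\bnu}\bigr](I-dH)^{-1}P$, the difference $R-N$ is proportional to $(\bnu_h-\bnu)(\bnu_h-\bnu)^t+\bnu\bnu_h^t$; applying $(\bgradgh\eta)\bnu_h=0$ collapses the bracket to $(\bgradgh\eta)\bnu\bnu^t$, and the transposed relation $\bnu^t(I-dH)^{-1}P=0$ finishes the job. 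I expect matching the two-sided-symmetric form of $R$ to $N$ to be the main obstacle, since it requires carefully using tangentiality on the left and \eqref{e:Hinv} on the right.

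For \eqref{e:inqRQ}, $\|Q\|_{L^\infty(T)}\le(1+\|dH\|_{L^\infty(T)})\|P_h\|\le 1+Ch_T^2\kappa_T^2\le C$ by \eqref{e:dH} and \eqref{suff}. For $R$, I would first observe that $\bnu_h\cdot\bnu=1-\tfrac12|\bnu_h-\bnu|^2\ge 1/2$ provided $c_1$ in \eqref{suff} is small enough, using \eqref{e:nu}. The rank-one correction in $R$ then has norm bounded by $|\bnu_h-\bnu|^2/(\bnu_h\cdot\bnu)\le Ch_T^2\kappa_T^2\le C$, and combined with $\|(I-dH)^{-1}\|_{L^\infty(T)}\le C$ from \eqref{e:inv} and $\|P\|\le 1$ this gives $\|R\|_{L^\infty(T)}\le C$.
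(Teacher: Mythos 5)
Your proof is correct and follows essentially the same route as the paper: identity \eqref{inq1} from \eqref{inq105}, \eqref{e:YX}, \eqref{e:PPh}; identity \eqref{inq2} by inverting $\grad\bY=(I-dH)P\grad\bX$ with the oblique projector $I-\tfrac{\bnu\otimes\bnu_h}{\bnu_h\cdot\bnu}$ (your direct verification that this projector recovers $\grad\bX$ from $P\grad\bX$ is just the transpose of the paper's identity \eqref{aux201}); and the final symmetrization to the stated $R$ via $(\bgradgh\eta)\bnu_h=0$ and \eqref{e:Hinv}, exactly as in the paper. Your bound on $\bnu_h\cdot\bnu\ge 1/2$ and the resulting estimate \eqref{e:inqRQ} likewise match the paper's (terser) argument.
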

\begin{proof}
Using~\eqref{inq105} and~\eqref{e:YX} we get
\begin{equation*}
\bgradgh\eta(\bX)=\bgradg\eta^\ell(\bY)[I-d(\bX)H(\bX)]\grad\bX G_\bX^{-1}\grad\bX^t
\end{equation*}
where we used that $\bgradg\eta^\ell \bnu \otimes\bnu=0$. Then~\eqref{inq1} follows from~\eqref{e:PPh}.

To prove~\eqref{inq2}, we use~\eqref{e:YX} and~\eqref{PH} to get $\grad\bY=P[I-d(\bX)H(\bX)]\grad\bX$. Hence, we have
\begin{equation}\label{e:gradY}
(\grad \bY)^t= (\grad\bX)^t[I-d(\bX)H(\bX)]P.
\end{equation}
Using \eqref{PH} we get 
\begin{equation*}
  (I-d H)P(I-dH)^{-1}\biggl(I-\frac{\bnu_h\otimes\bnu}{\bnu_h\cdot\bnu}\biggr)
  =P(I-d H)(I-dH)^{-1}\biggl(I-\frac{\bnu_h\otimes\bnu}{\bnu_h\cdot\bnu}\biggr)
  =P\biggl(I-\frac{\bnu_h\otimes\bnu}{\bnu_h\cdot\bnu}\biggr).
\end{equation*}
However,
\begin{equation*}
P\biggl(I-\frac{\bnu_h\otimes\bnu}{\bnu_h\cdot\bnu}\biggr)
=I-\bnu\otimes\bnu-\frac{\bnu_h\otimes\bnu}{\bnu_h\cdot\bnu}
+\frac{(\bnu\otimes\bnu)\bnu_h\otimes\bnu}{\bnu_h\cdot\bnu}
=I-\frac{\bnu_h\otimes\bnu}{\bnu_h\cdot\bnu}.
\end{equation*}
This gives
 \begin{equation}\label{aux201}
  (I-d H)P(I-dH)^{-1}\biggl(I-\frac{\bnu_h\otimes\bnu}{\bnu_h\cdot\bnu}\biggr)=I-\frac{\bnu_h\otimes\bnu}{\bnu_h\cdot\bnu}
  \end{equation}

 So, from~\eqref{aux201}, \eqref{e:gradY} and~\eqref{e:XY} we have
\begin{equation*}
  (\grad\bY)^t(I-dH)^{-1}\biggl(I-\frac{\bnu_h\otimes\bnu}{\bnu_h\cdot\bnu}\biggr)
  =(\grad \bX)^t\biggl(I-\frac{\bnu_h\otimes\bnu}{\bnu_h\cdot\bnu}\biggr)
  =(\grad \bX)^t.
\end{equation*}
Thus, using~\eqref{inq106} and the above identity we gather that
\begin{alignat*}{1}
\bgradg\eta^\ell(\bY)
=&\bgradgh\eta(\bX)\biggl(I-\frac{\bnu\otimes\bnu_h}{\bnu_h\cdot\bnu}\biggr)
(I-dH)^{-1}\grad\bY G_\bY^{-1}\grad\bY^t
\\
=&\bgradgh\eta(\bX)\biggl(I-\frac{\bnu\otimes\bnu_h}{\bnu_h\cdot\bnu}\biggr)(I-dH)^{-1}P,
\end{alignat*}
from~\eqref{e:PPh}. Clearly we have  $(\bgradgh\eta)~\bnu_h\otimes\bnu_h=0=(\bgradgh\eta)~\bnu_h\otimes\bnu$, and $P(I-dH)^{-1}\bnu\otimes\bnu=0$ follows from~\eqref{e:Hinv}. So we get
\begin{equation*}
\bgradg\eta^\ell(\bY)
=\bgradgh\eta
\biggl[I-\frac{(\bnu_h-\bnu)\otimes(\bnu-\bnu_h)}{\bnu_h\cdot\bnu}\biggr]
(I-dH)^{-1}P.
\end{equation*}
Here we used that $(\bnu_h-\bnu)\otimes(\bnu-\bnu_h)= -\bnu_h \otimes \bnu_h-\bnu \otimes \bnu+ \bnu_h \otimes \bnu+\bnu \otimes \bnu_h$. This proves~\eqref{inq2}. Finally,~\eqref{e:inqRQ} follows from~\eqref{e:nu},~\eqref{e:dH},~\eqref{e:inv} and~\eqref{suff}.
\end{proof}

Next, we write an integration identity.
\begin{lemma}
Let $\eta\in L^1(T)$. Then, if $dA$ is the surface measure in $T^\ell$ and $dA_h$ is the surface measure in $T$ it follows that
\begin{equation}\label{int}
\int_{T^\ell} \eta^\ell\,dA= \int_T\eta\delta_T\,dA_h, 
\end{equation}
where
\begin{equation}\label{e:deltaT}
\delta_T=\sqrt{\det(G_\bY G_\bX^{-1})}. 
\end{equation}
\end{lemma}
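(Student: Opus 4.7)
The plan is to proceed by the standard parametric change of variables twice, once via $\bY$ and once via $\bX$, and then combine. The push‑forward lift $\eta^\ell$ is set up precisely so that $\hat\eta(\theta_1,\theta_2) = \eta(\bX(\theta_1,\theta_2)) = \eta^\ell(\bY(\theta_1,\theta_2))$, which is what allows a single integral over $\hat T$ to represent both sides.

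First I would invoke the definition of the surface measure on a smooth parametrized $2$-surface in $\R^3$: for a parametrization $\boldsymbol\Phi:\hat T\to S$, one has $dA_S = \sqrt{\det G_{\boldsymbol\Phi}}\,d\theta_1\,d\theta_2$, where $G_{\boldsymbol\Phi} = (\grad\boldsymbol\Phi)^t \grad\boldsymbol\Phi$ is the first fundamental form. Applying this with $\boldsymbol\Phi=\bY$ gives
\begin{equation*}
\int_{T^\ell} \eta^\ell\,dA \;=\; \int_{\hat T} \hat\eta(\theta_1,\theta_2)\,\sqrt{\det G_\bY(\theta_1,\theta_2)}\,d\theta_1\,d\theta_2,
\end{equation*}
and applying it with $\boldsymbol\Phi=\bX$ (noting that $\bX$ is an affine parametrization of the flat triangle $T$, so this specializes to the usual surface integral on $T$) gives
\begin{equation*}
\int_T \eta\,\delta_T\,dA_h \;=\; \int_{\hat T} \hat\eta(\theta_1,\theta_2)\,\delta_T(\bX(\theta_1,\theta_2))\,\sqrt{\det G_\bX(\theta_1,\theta_2)}\,d\theta_1\,d\theta_2.
\end{equation*}

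Next I would match the two integrands on $\hat T$. With $\delta_T$ defined by \eqref{e:deltaT}, the product in the second display is $\sqrt{\det(G_\bY G_\bX^{-1})}\sqrt{\det G_\bX} = \sqrt{\det G_\bY}$, so the two right‑hand sides coincide and \eqref{int} follows. Since $\det G_\bX>0$ (as $\bX$ is a diffeomorphism from $\hat T$ onto the nondegenerate flat triangle $T$), the ratio under the square root is well defined.

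There is essentially no obstacle here: the lemma is a tautology of the area formula once one has set up $\bX$ and $\bY$ as two parametrizations of $T$ and $T^\ell$ over the same reference triangle $\hat T$. The only point that deserves a brief comment is well‑posedness of $\delta_T$, namely that $G_\bX$ is invertible (equivalently, $\bx_1,\bx_2$ are linearly independent, which holds because $T$ is a nondegenerate triangle under the shape‑regularity assumption).
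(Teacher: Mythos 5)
Your proof is correct and is essentially the same as the paper's: both apply the standard area formula over the common reference triangle $\hat T$ with the two parametrizations $\bX$ and $\bY$, and then match the integrands using $\delta_T\sqrt{\det G_\bX}=\sqrt{\det G_\bY}$. Your version is slightly more explicit (the paper states the two change-of-variables identities and stops), and your remark on the invertibility of $G_\bX$ is a harmless, sensible addition.
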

\begin{proof}
The result follows from the change of variables formulas~\cite{MR0394451}
\begin{equation*}
  \int_{T^\ell}\eta^\ell\,dA=\int_{\hat T}\hat\eta\sqrt{\det G_\bY}\,d\theta_1\,d\theta_2,
  \qquad
  \int_{\hat T}\hat\eta\sqrt{\det G_\bX^{-1}}\,d\theta_1\,d\theta_2=\int_T\eta\,dA_h.
\end{equation*}
 \end{proof}

Combining~\eqref{int},~\eqref{inq1} and~\eqref{inq2}, we have that
\begin{gather}
  \int_T\bgradgh\eta\cdot\bgradgh\psi\,dA_h
  =\int_{T^\ell}\bgradg\eta^\ell Q^\ell \cdot  \bgradg \psi^\ell Q^\ell
  \frac1{\delta_T^\ell}\,dA, \label{grad_grad_form_map_Q}
  \\
  \int_{T^\ell}\bgradg\eta^\ell\cdot\bgradg\psi^\ell\,dA
  =\int_T\bgradgh\eta R\cdot\bgradgh\psi R\delta_T\,dA_h. \label{grad_grad_form_map_R}
\end{gather}


Next, we prove some bounds for $\delta_T$.
\begin{lemma}\label{l:deltaest}
Assuming that~\eqref{suff} holds and defining $\delta_T$ by~\eqref{e:deltaT} we have that
\begin{gather}
\|\delta_T-1\|_{L^\infty(T)}\le Ch_T^2\kappa_T^2, \label{delta}
\\
\|\frac{1}{\delta_T}-1\|_{L^\infty(T)}\le Ch_T^2\kappa_T^2. \label{invdelta}
\end{gather}
\end{lemma}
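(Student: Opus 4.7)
The plan is to first derive an explicit formula for $\delta_T$ in terms of the principal curvatures and the angle between $\bnu$ and $\bnu_h$, and then bound each factor using the tools already in hand. From \eqref{e:YX} together with $PH=HP=H$ in \eqref{PH} we have $\grad\bY=(P-dH)\grad\bX$, with $P-dH=P(I-dH)=(I-dH)P$ and $(P-dH)\bnu=0$. Since $\det G_\bX=|\bx_1\times\bx_2|^2$ and $\det G_\bY=|(P-dH)\bx_1\times(P-dH)\bx_2|^2$, I pick at each point in $T$ a right-handed orthonormal basis $\{\bv_1,\bv_2,\bnu\}$ of $\R^3$ diagonalizing $H$ with eigenvalues $k_1$, $k_2$, $0$, and expand $\bx_i=a_{i1}\bv_1+a_{i2}\bv_2+b_i\bnu$. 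Then $(P-dH)\bx_i=(1-dk_1)a_{i1}\bv_1+(1-dk_2)a_{i2}\bv_2$, its cross product is parallel to $\bnu$ with magnitude $|(1-dk_1)(1-dk_2)(a_{11}a_{22}-a_{12}a_{21})|$, while extracting the $\bnu$-component of $\bx_1\times\bx_2=|\bx_1\times\bx_2|\bnu_h$ yields $a_{11}a_{22}-a_{12}a_{21}=|\bx_1\times\bx_2|(\bnu\cdot\bnu_h)$. Using \eqref{e:d} and \eqref{suff} to get $|dk_i|\le Ch_T^2\kappa_T^2\le Cc_1<1$, and $\bnu\cdot\bnu_h>0$ by the choice of $\bnu_h$, we arrive at
\[
\delta_T=(1-dk_1)(1-dk_2)(\bnu\cdot\bnu_h).
\]

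Given this formula, \eqref{delta} follows by expanding $(1-dk_1)(1-dk_2)=1-d(k_1+k_2)+d^2k_1k_2$ and bounding $|d(k_1+k_2)|\le C\|d\|_{L^\infty(T)}\kappa_T\le Ch_T^2\kappa_T^2$ via \eqref{e:d}, $|d^2k_1k_2|\le C\|d\|_{L^\infty(T)}^2\kappa_T^2\le Ch_T^4\kappa_T^4\le Ch_T^2\kappa_T^2$ via \eqref{suff}, and $|1-\bnu\cdot\bnu_h|=\tfrac12|\bnu-\bnu_h|^2\le Ch_T^2\kappa_T^2$ by \eqref{e:nu}. A triangle inequality of the form
\[
|1-\delta_T|\le |1-(1-dk_1)(1-dk_2)|+|(1-dk_1)(1-dk_2)|\,|1-\bnu\cdot\bnu_h|
\]
then yields \eqref{delta}.

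For \eqref{invdelta}, once $c_1$ is chosen small enough in \eqref{suff} the bound just established forces $\delta_T\ge 1/2$, so $|1/\delta_T-1|=|1-\delta_T|/\delta_T\le 2|1-\delta_T|\le Ch_T^2\kappa_T^2$. The main obstacle is the first step: obtaining the explicit, orientation-consistent expression for the Jacobian $\delta_T$; once this is in place, the remaining estimates are routine consequences of the bounds already established.
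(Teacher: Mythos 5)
Your proof is correct, but it follows a genuinely different route from the paper's. The paper never produces an explicit formula for $\delta_T$: it writes $G_\bY=(\grad\bX)^t(I+B)\grad\bX$ with $B=-2dH+d^2H^2-(\bnu-\bnu_h)\otimes(\bnu-\bnu_h)$, shows $\|B\|_{L^\infty(T)}\le Ch_T^2\kappa_T^2$ from \eqref{e:nu} and \eqref{e:dH}, deduces $G_\bY G_\bX^{-1}=I+M$ with $\|M\|_{L^\infty(T)}\le Ch_T^2\kappa_T^2$ (using $\|\grad\bX\|\le Ch_T$ and $\|G_\bX^{-1}\|\le Ch_T^{-2}$), and then gets $\delta_T^2=(1+\lambda_1)(1+\lambda_2)$ from the eigenvalues of $M$, finishing with $\delta_T-1=(\delta_T^2-1)/(\delta_T+1)$. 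You instead derive the classical product formula $\delta_T=(1-dk_1)(1-dk_2)\,\bnu\cdot\bnu_h$ by computing the two area elements as cross products in an orthonormal eigenbasis of $H$ (legitimate, since $H$ is symmetric with $H\bnu=0$ by \eqref{e:Hnu}), and then bound each factor; this is essentially the route taken in the earlier literature (e.g.\ Dziuk--Elliott, Demlow--Dziuk). Your version buys an explicit, more informative expression — it makes the leading-order geometric error $-d\operatorname{tr}(H)-\tfrac12|\bnu-\bnu_h|^2$ visible, gives positivity and the lower bound $\delta_T\ge1/2$ for free, and incidentally sidesteps the paper's slightly loose claim that $M$ itself is symmetric (it is only similar to a symmetric matrix, being a product of two symmetric ones) — at the cost of some pointwise eigenbasis and orientation bookkeeping, which you handle correctly by fixing the sign of $\bx_1\times\bx_2$ relative to $\bnu_h$ and using $\bnu\cdot\bnu_h>0$ together with $1-dk_i>0$ under \eqref{suff}. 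All the factor estimates you invoke ($|dk_i|\le Ch_T^2\kappa_T^2$ from \eqref{e:d}, $1-\bnu\cdot\bnu_h=\tfrac12|\bnu-\bnu_h|^2\le Ch_T^2\kappa_T^2$ from \eqref{e:nu}) are the same primitives the paper uses, so the two arguments are of comparable length and rigor.
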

\begin{proof}
From~\eqref{e:YX} and~\eqref{e:XY} we have
\begin{multline*}
  G_\bY=\grad\bX^t(I-dH-\bnu\otimes\bnu)\grad\bY
  =\grad\bX^t(I-dH)\grad\bY
  \\
  =\grad\bX^t(I-dH)^2\grad\bX
  -\grad\bX^t (I-dH) \bnu\otimes\bnu\grad\bX.
\end{multline*}
Using~\eqref{e:Hnu} we get
\begin{equation*}
  (\grad\bX)^t(I-dH)\bnu\otimes\bnu\grad\bX=\grad\bX^t\bnu\otimes\bnu\grad\bX.
\end{equation*}
By~\eqref{e:XY},
\[
(\grad\bX)^t\bnu_h\otimes\bnu_h\grad\bX
=(\grad\bX)^t\bnu_h\otimes\bnu\grad\bX
=(\grad\bX)^t\bnu\otimes\bnu_h\grad\bX=0.
\]
Hence,
\begin{equation*}
  (\grad\bX)^t\bnu\otimes\bnu\grad\bX
  =(\grad\bX)^t(\bnu-\bnu_h)\otimes(\bnu-\bnu_h)\grad\bX.
\end{equation*}
Therefore, we get
\begin{equation*}
G_\bY=(\grad\bX)^t[(I-dH)^2-(\bnu-\bnu_h)\otimes(\bnu-\bnu_h)]\grad\bX,
\end{equation*}
or $G_\bY=(\grad\bX)^t(I+B)\grad\bX$ where $B=-2dH+d^2H^2-(\bnu-\bnu_h)\otimes(\bnu-\bnu_h)$. Therefore,
\begin{equation*}
G_\bY G_\bX^{-1}=I+M\quad\text{where }M=\grad\bX^tB\grad\bX G_\bX^{-1}.
\end{equation*}
It is clear that $\|\grad\bX\|_{L^\infty(T)}\le Ch_T$. Also, not difficult to see that  $\|G_\bX^{-1} \|_{L^\infty(T)}\le  C h_T^{-2}$. Moreover, using~\eqref{e:nu} and~\eqref{e:dH} we gather that $\|B\|_{L^\infty(T)}\le Ch_T^2\kappa_T^2$. Hence, $\|M\|_{L^\infty(T)}\le Ch_T^2\kappa_T^2$.
Since $M$ is symmetric, consider the spectral decomposition $M=V\Lambda V^{-1}$, where
$\Lambda=\diag(\lambda_1,\lambda_2)$  and $V$ is orthogonal. Denoting the ith column of $V$ by $v_i$, we have that $\lambda_i=v_i^tM v_i$ and then $\|\lambda_i\|_{L^\infty(T)}\le Ch_T^2\kappa_T^2$ (for $i=1,2$). We also note that
\begin{equation*}
G_\bY G_\bX^{-1}=I+V\Lambda V^{-1}=V(I+\Lambda)V^{-1}.
\end{equation*}
Therefore, we obtain
\begin{equation*}
  \delta_T^2
  =\det(G_\bY G_\bX^{-1})
  =\det(V)\det(I+\Lambda)\det(V^{-1})
  =(1+\lambda_1)(1+\lambda_2),
\end{equation*}
which yields
\begin{equation*}
\|\delta_T^2-1\|_{L^\infty(T)}\le Ch_T^2\kappa_T^2.
\end{equation*}
Using that $\delta_T-1=(\delta_T^2-1)/(\delta_T+1)$, we obtain~\eqref{delta}. The inequality~\eqref{invdelta} follows from the previous inequality and the fact $(\delta_T^{-1}-1)=\delta_T^{-1} (1-\delta_T)$.
\end{proof}

We can now state the following result which follows from Lemmas~\ref{l:R_Q_bound} and~\ref{l:deltaest}, and equations~\eqref{grad_grad_form_map_Q},~\eqref{grad_grad_form_map_R}.
\begin{lemma}\label{l:inqeta}
Assuming the hypotheses of Lemmas~\ref{l:R_Q_bound} and~\ref{l:deltaest}, we have that
\begin{equation*}
  \|\bgradg\eta^\ell\|_{L^2(T^\ell)}
  \le C\|\bgradgh\eta\|_{L^2(T)}
  \le C\|\bgradg \eta^\ell\|_{L^2(T^\ell)}.
\end{equation*}
\end{lemma}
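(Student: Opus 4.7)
The plan is to obtain the two inequalities separately, each time by taking $\psi=\eta$ in one of the two integration identities derived just above the lemma, namely \eqref{grad_grad_form_map_Q} and \eqref{grad_grad_form_map_R}, and then invoking the uniform $L^\infty$ bounds provided by the preceding two lemmas. The heart of the argument is purely algebraic: the identities rewrite each tangential-gradient $L^2$ norm on $T$ (resp.\ $T^\ell$) as an integral over the other triangle of a quadratic form in the gradient, with integrand twisted by $Q$ or $R$ and by the area-element factor $\delta_T$ or $1/\delta_T$.

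For the first inequality, I would set $\psi=\eta$ in \eqref{grad_grad_form_map_R} to obtain
\begin{equation*}
\int_{T^\ell}|\bgradg\eta^\ell|^2\,dA
=\int_T|\bgradgh\eta\,R|^2\delta_T\,dA_h.
\end{equation*}
By Lemma~\ref{l:R_Q_bound} we have $\|R\|_{L^\infty(T)}\le C$, and by Lemma~\ref{l:deltaest} together with assumption \eqref{suff} we get $\|\delta_T\|_{L^\infty(T)}\le 1+Ch_T^2\kappa_T^2\le C$. Combining, $\|\bgradg\eta^\ell\|_{L^2(T^\ell)}^2\le C\|\bgradgh\eta\|_{L^2(T)}^2$, which is the left-hand inequality after taking square roots.

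For the reverse inequality, I would set $\psi=\eta$ in \eqref{grad_grad_form_map_Q} to get
\begin{equation*}
\int_T|\bgradgh\eta|^2\,dA_h
=\int_{T^\ell}|\bgradg\eta^\ell\,Q^\ell|^2\frac{1}{\delta_T^\ell}\,dA,
\end{equation*}
then apply $\|Q\|_{L^\infty(T)}\le C$ (so $\|Q^\ell\|_{L^\infty(T^\ell)}\le C$ by the definition of the pushforward lift) together with the bound $\|1/\delta_T-1\|_{L^\infty(T)}\le Ch_T^2\kappa_T^2$ from Lemma~\ref{l:deltaest}, which again via \eqref{suff} yields $\|1/\delta_T^\ell\|_{L^\infty(T^\ell)}\le C$. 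Taking square roots gives the right-hand inequality.

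There is essentially no obstacle here: all the hard work has already been done in Lemmas~\ref{l:R_Q_bound} and~\ref{l:deltaest}, which package the geometric factors that arise when one passes between the discrete triangle $T$ and the curved triangle $T^\ell$. The only small care needed is to remember that the integrand in \eqref{grad_grad_form_map_Q} is expressed on $T^\ell$ via the pushforward lift, so that the $L^\infty$ bound on $Q$ on $T$ must be transferred to $Q^\ell$ on $T^\ell$; this is immediate since $\|Q^\ell\|_{L^\infty(T^\ell)}=\|Q\|_{L^\infty(T)}$ by definition of the lift. The constants depend only on shape regularity of $T$ and on $c_1$ in \eqref{suff}, not on $\Gamma$ or on the curvature $\kappa_T$.
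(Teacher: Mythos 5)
Your proposal is correct and follows exactly the route the paper intends: the paper gives no written proof for this lemma, stating only that it ``follows from Lemmas~\ref{l:R_Q_bound} and~\ref{l:deltaest}, and equations~\eqref{grad_grad_form_map_Q},~\eqref{grad_grad_form_map_R},'' and your argument is precisely that derivation, with the $L^\infty$ bounds on $Q$, $R$, $\delta_T$ and $1/\delta_T$ applied to the two integral identities with $\psi=\eta$.
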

In the following, we use the notation $\dD_iu=(\bgradg u)_i$, and write $\Hessian u$ as the $3\times3$ matrix with entries $\dD_i\dD_ju$ (also denoted by $\dD_{ij}u$).
\begin{lemma}\label{l:H2}
Assuming that~\eqref{suff} holds, we have
\begin{equation*}
  \|\grad_{\Gamma_h}^2\eta\|_{L^2(T)}
  \le C\|\Hessian\eta^\ell\|_{L^2(T^\ell)}
  +C(h_T\kappa_T^2+h_T^2\kappa_T\gamma_T)\|\bgradg\eta^\ell\|_{L^2(T^\ell)},
\end{equation*}
where $\gamma_T=\max_{ij}\|\bgrad H_{ij}\|_{L^\infty(T)}$.
\end{lemma}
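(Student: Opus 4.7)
The plan is to differentiate the chain rule identity from Lemma~\ref{l:R_Q_bound} one more time and carefully bound the extra curvature terms that arise. Starting from
\[
\bgradgh\eta(\bx)=(\bgradg\eta^\ell\circ\ba)(\bx)\,Q(\bx),\qquad Q=(I-dH)P_h,
\]
I apply $\dD_{h,j}$ componentwise and use the product rule. For the ``principal'' piece one differentiates $(\dD_k\eta^\ell)\circ\ba$, a function of the form $\phi_\ell$ with $\phi=\dD_k\eta^\ell$ defined on $\Gamma$, so Lemma~\ref{l:R_Q_bound} applies again and yields $\bgradgh[(\dD_k\eta^\ell)\circ\ba]=(\bgradg\dD_k\eta^\ell\circ\ba)Q$, whose entries are components of $(\Hessian\eta^\ell\circ\ba)Q$. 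The remaining piece is the derivative of the coefficient matrix $Q$. Collecting,
\[
\dD_{h,j}\dD_{h,i}\eta
=\sum_{k,m}(\Hessian\eta^\ell)_{mk}\!\circ\!\ba\,Q_{mj}Q_{ki}
+\sum_{k}(\dD_k\eta^\ell)\!\circ\!\ba\,\dD_{h,j}Q_{ki},
\]
so pointwise
\[
|\Hessian_h\eta|
\le C\|Q\|_{L^\infty(T)}^{2}\,|\Hessian\eta^\ell\circ\ba|
+C\|\bgradgh Q\|_{L^\infty(T)}\,|\bgradg\eta^\ell\circ\ba|.
\]

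The main step is then to bound $\|\bgradgh Q\|_{L^\infty(T)}$ with exactly the right powers of $h_T$, $\kappa_T$ and $\gamma_T$. Since $P_h$ is constant on $T$, $\bgradgh Q=-\bgradgh(dH)P_h$, and the product rule splits this into $(\bgradgh d)\otimes H$ and $d\,\bgradgh H$. For $\bgradgh d$, I would use $\grad d=\bnu^t$, so $\bgradgh d=\bnu^t P_h=(\bnu-(\bnu\!\cdot\!\bnu_h)\bnu_h)^t$, and the identity $1-\bnu\!\cdot\!\bnu_h=\tfrac12|\bnu-\bnu_h|^{2}$ together with \eqref{e:nu} gives $\|\bgradgh d\|_{L^\infty(T)}\le Ch_T\kappa_T$. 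Combined with $\|H\|_{L^\infty(T)}\le\kappa_T$ this produces the first contribution $Ch_T\kappa_T^{2}$. The second contribution $Ch_T^2\kappa_T\gamma_T$ comes directly from \eqref{e:d} and the definition of $\gamma_T$. Together with $\|Q\|_{L^\infty(T)}\le C$ from Lemma~\ref{l:R_Q_bound}, this gives the desired pointwise coefficients.

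Finally I square the pointwise inequality, integrate over $T$, and change variables to $T^\ell$ using the integration identity \eqref{int}: for any $f\in L^2(T^\ell)$,
\[
\int_T |f\circ\ba|^{2}\,dA_h=\int_{T^\ell}|f|^{2}\frac{1}{\delta_T^\ell}\,dA\le C\|f\|_{L^2(T^\ell)}^{2},
\]
where the last inequality uses the bound $\|\delta_T^{-1}\|_{L^\infty(T)}\le C$ from Lemma~\ref{l:deltaest}. Applying this with $f=\Hessian\eta^\ell$ and $f=\bgradg\eta^\ell$ delivers the stated estimate.

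The only delicate point I anticipate is computing $\|\bgradgh d\|_{L^\infty(T)}$ with a factor $h_T\kappa_T$ rather than the cruder $O(1)$ bound $\|\grad d\|_{L^\infty(T)}=\|\bnu\|=1$; it is the projection onto the tangent plane of $\Gamma_h$, combined with the closeness of $\bnu$ to $\bnu_h$ recorded in \eqref{e:nu}, that saves a power of $h_T\kappa_T$ and produces the curvature-squared structure $h_T\kappa_T^{2}$ in the final estimate. Everything else is routine chain rule and change of variables.
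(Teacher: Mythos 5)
Your proposal is correct and follows essentially the same route as the paper: differentiate the chain-rule identity \eqref{inq1} once more, treat the principal term by applying Lemma~\ref{l:R_Q_bound} again to $(\dD_j\eta^\ell)\circ\ba$, and extract the $h_T\kappa_T^2$ and $h_T^2\kappa_T\gamma_T$ contributions from $\bgradgh d$ (via $\bnu^t P_h$ and \eqref{e:nu}) and from $d\,\bgradgh H$ (via \eqref{e:d} and $\gamma_T$), respectively. The only cosmetic difference is that the paper kills the $O(1)$ part of $\bnu^t P_h$ by writing it as $(\bnu-\bnu_h)^tP_h$ using $\bnu_h^tP_h=0$, whereas you use $1-\bnu\cdot\bnu_h=\tfrac12|\bnu-\bnu_h|^2$; both yield the same $Ch_T\kappa_T$ bound.
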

\begin{proof}
Let $\bw=\bgradgh\eta$. Using~\eqref{inq1} we see that for $\bx \in T$,
\begin{equation*}
  w_i(\bx)
  =(P_h)_{ik}\bigl(I-d(\bx) H(\bx)\bigr)_{kj}\dD_j\eta^\ell(\ba(\bx)),
\end{equation*}
where we use Einstein summation convention.

Using the product rule and the fact that $P_h$ is constant  we have
\begin{equation*}
\bgradgh w_i(\bx)=\bJ_1(\bx)+\bJ_2 (\bx)+\bJ_3 (\bx),
\end{equation*}
where
\begin{equation*}
\begin{split}
  \bJ_1&=-(P_h)_{ik}H_{kj} \dD_j \eta^\ell \circ \ba \bgradgh d,
  \\
  \bJ_2&= -(P_h)_{ik}d  \dD_j \eta^\ell \circ \ba \bgradgh H_{kj},
  \\
  \bJ_3 &=(P_h) _{ik}(I-d H )_{kj} \bgradgh  (\dD_j \eta^\ell \circ \ba).
\end{split}
\end{equation*}
We start with $\bJ_3$ .  Using \eqref{inq1} we have
\begin{equation*}
\bgradgh  (\dD_j \eta^\ell \circ \ba) (x)= \bgradg \dD_j  \eta^\ell (\ba(x)) Q
\end{equation*}
Hence, using~\eqref{e:inqRQ},~\eqref{int},~\eqref{delta} and~\eqref{suff} we get 
\begin{equation*}
\|\bgradgh(\dD_j\eta^\ell\circ\ba)\|_{L^2(T)}\le C\|\bgradg^2\eta^\ell\|_{L^2(T^\ell)}.
\end{equation*}
If we combine this inequality with~\eqref{e:dH} and~\eqref{suff}, we have
\begin{equation*}
\|\bJ_3\|_{L^2(T))}\le C\|\bgradg^2\eta^\ell\|_{L^2(T^\ell)}.
\end{equation*}
Next, using \eqref{tanP} and the fact $ \bnu_h^t P_h=0$, we obtain
\begin{equation*}
\bgradgh d =  (\grad d) P_h=\bnu^t P_h=(\bnu-\bnu_h)^t P_h.
\end{equation*}
Hence, using~\eqref{e:nu},~\eqref{kappa},~\eqref{int},~\eqref{delta} and~\eqref{suff}  yields
\begin{equation*}
\|\bJ_1\|_{L^2(T)} \le  Ch_T \kappa_T^2  \|\bgradg \eta^\ell \|_{L^2(T^\ell)}.
\end{equation*}
Similarly,~\eqref{e:d},~\eqref{int},~\eqref{delta} and~\eqref{suff} yield 
\begin{equation*}
\|\bJ_2\|_{L^2(T)}\le Ch_T^2\kappa_T\gamma_T\|\bgradg\eta^\ell\|_{L^2(T^\ell)}.
\end{equation*}
Combining the above estimates gives the desired result.
\end{proof}

\section{Finite Element Spaces and local approximations}\label{s:fe}
We introduce the following finite dimensional approximation of \eqref{e:contin_laplace_beltrami}.  The finite element space is given by
\begin{equation*}
  S_h=\{v_h\in C^0(\Gamma_h):\,\intgh v_h\,dA_h=0,\,v_h|_T\text{ is linear for all }T\in\trih\},
  \qquad
  S_h^\ell=\{v_h^\ell:\,v_h\in S_h\}.
\end{equation*}
For $f_h\in L^2(\Gamma_h)$ with $\intgh f_h\,dA_h=0$, let $u_h\in S_h$ such that
\begin{equation}\label{e:FEM}
\intgh\bgradgh u_h\cdot\bgradgh v_h\,dA_h=\intgh f_h v_h\,dA_h\qquad\text{for all }v_h\in S_h.
\end{equation}
Existence and uniqueness of the finite-dimensional problem~\eqref{e:FEM} follows from noting that if $u_h$ is a solution with $f_h=0$, then $u_h$ must be constant with zero average. Thus $u_h=0$.

We will need a Poincar\'e's inequality, as follows~\cite{MR3038698}.
\begin{lemma}\label{l:poincare}
Assuming $\Gamma\subset\R^3$ a $C^3$ two-dimensional compact orientable surface without boundary, there exists a constant $c_p$ such that
\begin{equation}\label{poincareinq}
\|\phi\|_{L^2(\Gamma)}\le c_p\|\bgradg\phi\|_{L^2(\Gamma)}\qquad\text{ for all }\phi\in\Ho.
\end{equation}
\end{lemma}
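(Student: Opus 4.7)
The plan is a standard compactness/contradiction argument based on the Rellich--Kondrachov theorem on a closed $C^3$ surface. First I would assume without loss of generality that $\Gamma$ is connected (otherwise the inequality should be understood componentwise, and the argument below is applied on each component). Suppose, for contradiction, that no constant $c_p$ exists for which~\eqref{poincareinq} holds. Then for every integer $n\ge1$ there is a function $\phi_n\in\Ho$ with
\[
\|\phi_n\|_{L^2(\Gamma)} > n\,\|\bgradg\phi_n\|_{L^2(\Gamma)}.
\]
By homogeneity we may rescale so that $\|\phi_n\|_{L^2(\Gamma)}=1$, and hence $\|\bgradg\phi_n\|_{L^2(\Gamma)}<1/n$. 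In particular, the sequence $\{\phi_n\}$ is bounded in $H^1(\Gamma)$.

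Next I would invoke the Rellich--Kondrachov compact embedding $H^1(\Gamma)\hookrightarrow L^2(\Gamma)$, which holds because $\Gamma$ is a compact $C^3$ (in particular Lipschitz) manifold without boundary: cover $\Gamma$ by finitely many coordinate charts, use a $C^1$ partition of unity, and reduce to the classical flat Rellich theorem on bounded smooth domains in $\R^2$. This gives a subsequence (still denoted $\phi_n$) that converges strongly in $L^2(\Gamma)$ to some $\phi\in L^2(\Gamma)$. By the weak lower semicontinuity of the $H^1$-seminorm along the subsequence (or by weak compactness in $H^1(\Gamma)$ together with uniqueness of the limit), we obtain $\phi\in H^1(\Gamma)$ with $\bgradg\phi=0$ in the weak sense on $\Gamma$.

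The final step is to identify $\phi$. Since $\bgradg\phi=0$ a.e.\ and $\Gamma$ is connected, $\phi$ must be constant on $\Gamma$. The mean-zero constraint passes to the limit because $\phi_n\to\phi$ in $L^2(\Gamma)$ and $\int_\Gamma\phi_n\,dA=0$ for every $n$, so $\int_\Gamma\phi\,dA=0$, which forces $\phi\equiv0$. This contradicts $\|\phi\|_{L^2(\Gamma)}=\lim\|\phi_n\|_{L^2(\Gamma)}=1$, and the inequality follows.

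The only nontrivial ingredient is the Rellich compactness on $\Gamma$, and that is the main obstacle: one must verify that the tangential Sobolev space $H^1(\Gamma)$ defined via $\bgradg$ is equivalent (up to constants depending only on $\Gamma$, its curvatures, and a finite atlas) to the intrinsic Riemannian $H^1$ on $\Gamma$, so that the standard manifold Rellich theorem applies. Once that equivalence is in hand, the rest of the argument is routine. Note also that, as usual for such compactness proofs, the resulting $c_p$ is purely qualitative and depends on $\Gamma$ but not on any discretization parameter.
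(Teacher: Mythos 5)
The paper does not actually prove this lemma: it is stated with a citation to Dziuk and Elliott \cite{MR3038698} and used as a black box, so there is no in-paper argument to compare against. Your compactness/contradiction proof is the standard route to such an inequality and is essentially correct: normalize $\|\phi_n\|_{L^2(\Gamma)}=1$ with $\|\bgradg\phi_n\|_{L^2(\Gamma)}<1/n$, extract a subsequence converging strongly in $L^2(\Gamma)$ and weakly in $H^1(\Gamma)$ via Rellich--Kondrachov on the compact surface, use weak lower semicontinuity to conclude $\bgradg\phi=0$, identify $\phi$ as a constant, and kill it with the mean-zero constraint. You are also right that the only substantive ingredient is the compact embedding, which reduces to the flat case through a finite atlas and a partition of unity; for a $C^3$ compact surface the tangential and intrinsic $H^1$ norms are equivalent, so this is unproblematic. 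One point deserves more care than your parenthetical gives it: connectedness of $\Gamma$ is not a harmless normalization here but a genuine hypothesis. If $\Gamma$ has two components, the function equal to $+1$ on one and a suitable negative constant on the other lies in $\Ho$ (a single global mean-zero constraint) and has vanishing tangential gradient, so \eqref{poincareinq} as stated is false; ``understanding the inequality componentwise'' changes the space $\Ho$ and hence the statement. Since the paper clearly intends $\Gamma$ to be a single connected surface, this does not invalidate your argument, but you should state connectedness as an assumption rather than dismiss it. Note finally that the contradiction argument yields a purely qualitative $c_p$, which is consistent with how the paper treats $c_p$ downstream (it is tracked symbolically, never bounded in terms of the geometry).
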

Then we can state a simple energy estimate.
\begin{lemma}\label{l:energy}
Let $u$ solve~\eqref{e:contin_laplace_beltrami}, then
\begin{equation}\label{energy}
\|\bgradg u\|_{L^2(\Gamma)}\le c_p\|f\|_{L^2(\Gamma)}.
\end{equation}
\end{lemma}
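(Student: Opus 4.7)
The plan is to run the standard energy argument: test the weak formulation against the solution itself, then combine Cauchy--Schwarz with the Poincar\'e inequality from Lemma~\ref{l:poincare}.

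Concretely, I would first set $v = u$ in~\eqref{e:contin_laplace_beltrami}; this is legal because $u \in \Ho$ already satisfies the zero-mean condition. This yields
\begin{equation*}
  \|\bgradg u\|_{L^2(\Gamma)}^2 = \intg f u \, dA.
\end{equation*}
Next I would bound the right-hand side by Cauchy--Schwarz as $\|f\|_{L^2(\Gamma)} \|u\|_{L^2(\Gamma)}$, and then invoke Lemma~\ref{l:poincare} (using once more that $u$ has zero mean) to replace $\|u\|_{L^2(\Gamma)}$ by $c_p \|\bgradg u\|_{L^2(\Gamma)}$. Dividing both sides by $\|\bgradg u\|_{L^2(\Gamma)}$ (assuming it is nonzero; otherwise the inequality is trivial) gives~\eqref{energy}.

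There is no real obstacle: the only thing one has to be careful about is verifying that $u$ itself is an admissible test function, which amounts to checking the zero-mean constraint, and that the Poincar\'e constant $c_p$ appearing in the conclusion is exactly the one furnished by Lemma~\ref{l:poincare}. Both are immediate from the setup, so the proof should be a few lines.
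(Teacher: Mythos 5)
Your proof is correct and is exactly the standard energy argument the paper intends (the paper states the lemma without writing out a proof, treating it as immediate from testing with $v=u$, Cauchy--Schwarz, and Lemma~\ref{l:poincare}). No issues.
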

Before proving an \emph{a-priori} estimate for $u^\ell_h-u$ we will need to prove an important lemma that measures the inconsistency in going from $\Gamma$ to $\Gamma_h$. First, we need to develop notation to use in the next proof. Since $\delta_TdA_h=dA$ with respect to a given triangle $T$ and its lifting $T^\ell$, let us define
\begin{equation*}
\delta_h(\bx)=\delta_T,\quad\text{if }\bx\in T.
\end{equation*}

\begin{lemma}\label{inconsistency}
Let $v_h$ and $z_h$ belong to $S_h$. Then the following holds
\begin{gather}
|\int_{\Gamma_h}\bgradgh v_h\cdot\bgradgh z_h\,dA_h
-\int_\Gamma\bgradg v_h^\ell\cdot\bgradg z_h^\ell\,dA|
\le C\Psi_h\|\bgradg v_h^\ell\|_{L^2(\Gamma)}\|\bgradg z_h^\ell \|_{L^2(\Gamma)}  \label{incgrad}
\\
|\int_{\Gamma_h}v_hz_h\,dA_h-\int_\Gamma v_h^\ell z_h^\ell\,dA|
\le C\Psi_h \| v_h^\ell \|_{L^2(\Gamma)} \| z_h^\ell \|_{L^2(\Gamma)}  \label{incfun},
\end{gather}
where $\Psi_h=\max_{T\in\trih}\kappa_T^2h_T^2$.
\end{lemma}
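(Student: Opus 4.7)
The plan is to establish both estimates triangle by triangle and then sum. The $L^2$ inconsistency \eqref{incfun} is the easier of the two and follows almost at once from the change-of-variables formula \eqref{int}: locally one has
\[
\int_{T^\ell} v_h^\ell z_h^\ell\,dA - \int_T v_h z_h\,dA_h = \int_T v_h z_h(\delta_T-1)\,dA_h,
\]
and $|\delta_T-1|\le C\Psi_h$ pointwise by Lemma~\ref{l:deltaest}. Cauchy--Schwarz, combined with $\|v_h\|_{L^2(T)}\le C\|v_h^\ell\|_{L^2(T^\ell)}$ (which itself follows from \eqref{int} together with the lower bound $\delta_T\ge 1/2$, available under \eqref{suff}), and summation over $T\in\trih$ then yield \eqref{incfun}.

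For the gradient inconsistency \eqref{incgrad}, I would use \eqref{grad_grad_form_map_Q} to pull the $\Gamma_h$ integral back onto $T^\ell$, so that the local difference becomes
\[
\int_{T^\ell} \bgradg v_h^\ell \bigl[(\delta_T^\ell)^{-1} Q^\ell (Q^\ell)^t - I\bigr](\bgradg z_h^\ell)^t\,dA.
\]
Because $\bgradg v_h^\ell$ and $\bgradg z_h^\ell$ are tangent to $\Gamma$ (i.e.\ $\bgradg v_h^\ell P = \bgradg v_h^\ell$ and similarly for $z_h^\ell$), one may sandwich the bracketed factor between two copies of $P$ without changing the integral. Cauchy--Schwarz on $T^\ell$ and summation then reduce \eqref{incgrad} to the pointwise operator bound
\[
\|\delta_T P - PQQ^tP\|_{L^\infty(T)} \le C\Psi_h,
\]
together with the harmless factor $|\delta_T^{-1}|\le 2$.

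The substance of the proof is this pointwise estimate, whose key input is the identity $PH=HP=H$ from \eqref{PH}. Combined with $Q=(I-dH)P_h$ it gives $P(I-dH)=(I-dH)P=P-dH$, and therefore
\[
PQQ^tP=(P-dH)P_h(P-dH)=(P-dH)^2 - \bigl[(P-dH)\bnu_h\bigr]\otimes\bigl[(P-dH)\bnu_h\bigr],
\]
with $(P-dH)^2 = P-2dH+d^2H^2$. Consequently
\[
\delta_T P - PQQ^tP = (\delta_T-1)P + 2dH - d^2H^2 + \bigl[(P-dH)\bnu_h\bigr]\otimes\bigl[(P-dH)\bnu_h\bigr],
\]
and every term has $L^\infty(T)$ norm bounded by $C\Psi_h$: the first by Lemma~\ref{l:deltaest}; the next two by \eqref{e:dH} together with $\Psi_h\le c_1<1$; and the rank-one correction because $P\bnu_h=P(\bnu_h-\bnu)$ has norm at most $|\bnu_h-\bnu|\le Ch_T\kappa_T$ by \eqref{e:nu}, while $\|dH\bnu_h\|_{L^\infty(T)}\le C\Psi_h$ by \eqref{e:dH}. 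The principal obstacle is isolating this last rank-one piece: since $\bnu_h$ is not tangent to $\Gamma$ it does not drop out of $P_h$ automatically, and one must use the sharp size $|\bnu-\bnu_h|=O(h_T\kappa_T)$ to recover a full $\Psi_h$; everything else is bookkeeping with estimates already established.
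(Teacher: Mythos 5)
Your proof is correct and follows essentially the same route as the paper's: both estimates reduce, via \eqref{int} and \eqref{grad_grad_form_map_Q}, to pointwise bounds on $\delta_T-1$ and on $\delta_T^{-1}PQ Q^tP-P$, with the key identity $PH=HP=H$ and the observation that the $P_h$-versus-$P$ discrepancy contributes only a rank-one term quadratic in $\bnu-\bnu_h$, hence of size $\Psi_h$ by \eqref{e:nu}. The paper packages the same matrix as $\delta_T^{-1}(I-dH)S(I-dH)$ with $S=PP_hP$ and splits off $S-P=-P(\bnu_h-\bnu)\otimes(\bnu_h-\bnu)P$, which is just a different bookkeeping of your expansion $(P-dH)P_h(P-dH)$.
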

\begin{proof}
Using~\eqref{int} we get
\begin{equation*}
|\int_{\Gamma_h}v_hz_h\,dA_h-\int_\Gamma v_h^\ell z_h^\ell\,dA|
=|\int_{\Gamma} v_h^\ell z_h^\ell\,(\frac1{\delta_h^\ell}-1)\,dA| 
\end{equation*}
Hence, \eqref{incfun} follows from \eqref{invdelta}.

To prove~\eqref{incgrad} we use~\eqref{grad_grad_form_map_Q} to get
\begin{equation*}
  \int_{\Gamma_h}\bgradgh v_h \cdot\bgradgh z_h\,dA_h
  =\int_{\Gamma}(\bgradg v_h^\ell Q^\ell)\cdot(\bgradg z_h^\ell Q^\ell)\frac{1}{\delta_h^\ell}\,dA.
\end{equation*}
Using that $\bgradg(\cdot)P=\bgradg(\cdot)$ we have
\begin{equation*}
  \int_{\Gamma_h}\bgradgh v_h \cdot\bgradgh z_h\,dA_h
  =\int_\Gamma\bgradg v_h^\ell M\cdot\bgradg z_h^\ell \,dA,
\end{equation*}
where
\begin{equation*}
M=\frac{1}{\delta_h^\ell}(PQ^\ell)(PQ^\ell)^t.
\end{equation*}
On the other hand, again using that $\bgradg(\cdot)P=\bgradg(\cdot)$ we get
\begin{equation*}
\int_\Gamma\bgradg v_h^\ell\cdot\bgradg z_h^\ell\,dA
=\int_\Gamma\bgradg v_h^\ell P\cdot\bgradg z_h^\ell\,dA. 
\end{equation*}

Hence, 
\begin{equation}\label{aux301}
 | \int_{\Gamma_h}\bgradgh v_h \cdot\bgradgh z_h\,dA_h-\int_{\Gamma}\bgradg v_h^\ell \cdot \bgradg z_h^\ell \,dA|= |\int_{\Gamma}\bgradg v_h^\ell (M-P) \cdot \bgradg z_h^\ell \,dA|.
\end{equation}
We now proceed to bound $(M-P)$. We first use \eqref{PH}, to get on $\Gamma_h$
\begin{equation*}
  PQ^\ell\circ\ba=PQ=P(I-dH)P_h=(I-d H)PP_h.
\end{equation*}
 Hence, we get
 \begin{equation*}
 M\circ\ba=\frac1{\delta_h}(I-d H)S(I-dH),
 \end{equation*}
 where $S=PP_h P$, and then the triangle inequality yields
 \begin{alignat*}{1}
   \|M-P\|_{L^\infty(T^\ell)}
   =&\|\delta_h^{-1}(I-dH)S(I-dH)-P\|_{L^\infty(T)} \\
   \le& \|\delta_h^{-1}(I-dH)S(I-dH)-S\|_{L^\infty(T)}+\|S-P\|_{L^\infty(T)}.
 \end{alignat*}
 Now using~\eqref{e:dH},~\eqref{suff} and~\eqref{invdelta} and the fact that $S$ is bounded we obtain
 \begin{equation*}
  \|\delta_h^{-1}(I-dH) S (I-dH) -S\|_{L^\infty(T)}  \le C h_T^2\kappa_T^2.
 \end{equation*}
Since $P\bnu_h\otimes\bnu P=P\bnu\otimes\bnu_h P=P\bnu\otimes\bnu P=0$ we have that on $\Gamma_h$,
 \begin{equation*}
S=P(I-\bnu_h\otimes\bnu_h)P=P(I-(\bnu_h-\bnu)\otimes(\bnu_h-\bnu))P.
\end{equation*}
Finally using that $P^2=P$ we have
\begin{equation*}
  S-P=-P(\bnu_h-\bnu)\otimes(\bnu_h-\bnu)P.
\end{equation*}
Therefore, it follows from~\eqref{e:nu} that $\|S-P\|_{L^\infty(T)}\le Ch_T^2\kappa_T^2$. Using the previous inequalities we obtain
\begin{equation*}
 \|M-P\|_{L^\infty(T^\ell)}\le Ch_T^2\kappa_T^2,
\end{equation*}
and hence
\begin{equation*}
 \|M-P\|_{L^\infty(\Gamma)}\le C\Psi_h,
\end{equation*}
and thus~\eqref{incgrad} follows from this inequality and~\eqref{aux301}.
\end{proof}

\begin{theorem}\label{t:quasi}
Let $u\in\Ho$ be the solution of~\eqref{e:contin_laplace_beltrami} and let
$u_h\in S_h$ that solves~\eqref{e:FEM}. Assume that $f^\ell_h$ satisfies $\|f_h^\ell\|_{L^2(\Gamma)}\le C\|f\|_{L^2(\Gamma)}$. Then there exists a constant $C$ such that
\begin{equation*}
\|\bgradg (u-u^\ell_h)\|_{L^2(\Gamma)} \le C \min_{\phi_h \in S_h} \| \bgradg (u-\phi^\ell_h)\|_{L^2(\Gamma)} + Cc_p(\Psi_h  \|f\|_{L^2(\Gamma)}  + \|f-f^\ell_h\|_{L^2(\Gamma)} ),
\end{equation*}
where $\Psi_h=\max_{T\in\trih}\kappa_T^2h_T^2$ is as in Lemma~\ref{inconsistency}.
\end{theorem}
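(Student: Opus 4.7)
The plan is to carry out a Strang-type quasi-optimality argument, where the principal novelty over the standard C\'ea lemma is the careful control of the geometric inconsistency between $\Gamma$ and $\Gamma_h$ afforded by Lemma~\ref{inconsistency} and the change-of-variables identity~\eqref{int} combined with the bound~\eqref{invdelta} on $\delta_T^{-1}-1$.

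For any $\phi_h \in S_h$, set $w_h = \phi_h - u_h \in S_h$ and start with the triangle inequality
\begin{equation*}
  \|\bgradg(u-u_h^\ell)\|_{L^2(\Gamma)} \le \|\bgradg(u-\phi_h^\ell)\|_{L^2(\Gamma)} + \|\bgradg w_h^\ell\|_{L^2(\Gamma)}.
\end{equation*}
To bound the second term, I expand
\begin{equation*}
  \|\bgradg w_h^\ell\|_{L^2(\Gamma)}^2
  = \int_\Gamma \bgradg w_h^\ell \cdot \bgradg(\phi_h^\ell - u)\,dA
  + \int_\Gamma \bgradg w_h^\ell \cdot \bgradg(u - u_h^\ell)\,dA.
\end{equation*}
The first integral is handled by Cauchy-Schwarz and produces the approximation term. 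For the second, I substitute the continuous equation~\eqref{e:contin_laplace_beltrami} with test function $w_h^\ell$ to turn $\int_\Gamma \bgradg u \cdot \bgradg w_h^\ell\,dA$ into $\int_\Gamma f\,w_h^\ell\,dA$, then invoke~\eqref{incgrad} to transport $\int_\Gamma \bgradg u_h^\ell \cdot \bgradg w_h^\ell\,dA$ onto $\Gamma_h$, apply~\eqref{e:FEM} with $v_h=w_h$, and finally transport the load integral back to $\Gamma$ via~\eqref{int} and~\eqref{invdelta}. The net result is
\begin{equation*}
  \int_\Gamma \bgradg w_h^\ell \cdot \bgradg(u - u_h^\ell)\,dA
  = \int_\Gamma (f - f_h^\ell)\,w_h^\ell\,dA + R,
\end{equation*}
with
\begin{equation*}
  |R| \le C\Psi_h\Bigl(\|\bgradg u_h^\ell\|_{L^2(\Gamma)}\|\bgradg w_h^\ell\|_{L^2(\Gamma)}
  + \|f_h^\ell\|_{L^2(\Gamma)}\|w_h^\ell\|_{L^2(\Gamma)}\Bigr).
\end{equation*}

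To close the estimate I need two ingredients. First, a discrete energy bound $\|\bgradg u_h^\ell\|_{L^2(\Gamma)} \le C c_p \|f\|_{L^2(\Gamma)}$, which I would obtain by testing~\eqref{e:FEM} with $u_h$, using a discrete Poincar\'e inequality on $\Gamma_h$ (deduced from Lemma~\ref{l:poincare} by lifting and absorbing the small drift in the discrete mean via~\eqref{incfun}), the hypothesis $\|f_h^\ell\|_{L^2(\Gamma)}\le C\|f\|_{L^2(\Gamma)}$, and the norm equivalence in Lemma~\ref{l:inqeta}. Second, Poincar\'e's inequality~\eqref{poincareinq} applied to a constant adjustment of $w_h^\ell$ having zero mean on $\Gamma$, which bounds $\|w_h^\ell\|_{L^2(\Gamma)}$ by $c_p\|\bgradg w_h^\ell\|_{L^2(\Gamma)}$ up to a harmless $\Psi_h$ perturbation. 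Combining, dividing through by $\|\bgradg w_h^\ell\|_{L^2(\Gamma)}$, and taking the infimum over $\phi_h \in S_h$ yields the theorem.

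The main obstacle is the bookkeeping of mean-zero constraints: both $u_h$ and $w_h$ have mean zero only with respect to the surface measure on $\Gamma_h$, whereas~\eqref{poincareinq} requires mean zero on $\Gamma$. This mismatch is of order $\Psi_h$ in $L^2$ and is controlled via~\eqref{incfun}, but care is needed to verify it does not contaminate the leading constant. The remaining steps are routine manipulations once the inconsistency lemma absorbs the geometric error into the single factor $\Psi_h$.
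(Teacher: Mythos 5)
Your argument is correct and reaches the stated bound, but it is organized differently from the paper's proof. The paper works with $\xi_h=u_h-\phi_h$ and estimates $\|\bgradgh \xi_h\|_{L^2(\Gamma_h)}^2$ \emph{on the discrete surface}, arranging the splitting $J_1+\dots+J_4$ so that the stiffness-form inconsistency~\eqref{incgrad} lands on the term involving $\phi_h$; the resulting factor $\|\bgradg\phi_h^\ell\|_{L^2(\Gamma)}$ is then absorbed via the triangle inequality and the \emph{continuous} energy estimate~\eqref{energy}. You instead work on $\Gamma$ with $w_h=\phi_h-u_h$ and apply~\eqref{incgrad} to the term involving $u_h$, so your remainder carries $\|\bgradg u_h^\ell\|_{L^2(\Gamma)}$ and you must supply a \emph{discrete} stability bound $\|\bgradg u_h^\ell\|_{L^2(\Gamma)}\le Cc_p\|f\|_{L^2(\Gamma)}$. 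That bound is true, but it is not available off the shelf in the paper: Lemma~\ref{l:poincare} is stated on $\Gamma$ and Lemma~\ref{l:energy} only for the continuous solution, so you genuinely need the extra step you sketch --- testing~\eqref{e:FEM} with $u_h$, lifting, and controlling the drift of the mean $\frac1{|\Gamma|}\int_\Gamma u_h^\ell\,dA$ by $C\Psi_h\|u_h^\ell\|_{L^2(\Gamma)}$ via~\eqref{int} and~\eqref{invdelta}, then absorbing this using the smallness of $\Psi_h$ from~\eqref{suff}. You also correctly flag the mean-zero bookkeeping ($w_h$ has zero $dA_h$-average on $\Gamma_h$, not zero $dA$-average on $\Gamma$), which both proofs must handle by subtracting a constant and using $\int_\Gamma f\,dA=0=\int_{\Gamma_h}f_h\,dA_h$. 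In short: your route is the more textbook Strang/C\'ea argument and yields a discrete energy estimate of independent interest, while the paper's arrangement is leaner because it sidesteps the discrete Poincar\'e inequality entirely. Make sure, if you write this up, that the discrete stability step is actually proved rather than cited, since it is the one ingredient your proof uses that the paper does not state.
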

\begin{proof}
For an arbitrary $\phi_h\in S_h$, set $\xi_h=u_h -\phi_h$ and
$\xi^\ell_h=u^\ell_h -\phi^\ell_h$. By Lemma~\ref{l:inqeta} we have
\begin{equation*}
\|\bgradg \xi_h^\ell\|_{L^2(\Gamma)} \le C  \|\bgradgh (u_h-\phi_h)\|_{L^2(\Gamma_h)}.
\end{equation*}
Then, we write for an arbitrary constant $c$
\begin{alignat*}{2}
  \|\bgradgh (u_h-\phi_h)\|_{L^2(\Gamma_h)}^2
  =& \int_{\Gamma_h} \bgradgh u_h \cdot \bgradgh\xi_h dA_h- \int_{\Gamma_h} \bgradgh \phi_h \cdot \bgradgh \xi_h\,dA_h
  \\
  =& \int_{\Gamma_h}f_h\xi_h\,dA_h-\int_{\Gamma_h}\bgradgh\phi_h\cdot\bgradgh\xi_h\,dA_h  \quad && \text{ by } \eqref{e:FEM}
  \\
  =& \int_{\Gamma_h} f_h \xi_h\,dA_h-\int_{\Gamma} f\cdot\xi_h^\ell\,dA
  \\
  &+ \int_\Gamma\bgradg u\cdot\bgradg\xi_h^\ell\,dA - \int_{\Gamma_h} \bgradgh \phi_h \cdot \bgradgh \xi_h\,dA_h, &&  \text{ by } \eqref{e:contin_laplace_beltrami}
  \\
  = &J_1+J_2+J_3+J_4, &&
\end{alignat*}
where after using that $\int_\Gamma f\,dA=0=\int_{\Gamma_h}f_h\,dA_h$,
\begin{alignat*}{1} 
  J_1=&\int_{\Gamma_h} f_h(\xi_h-c)\,dA_h- \int_\Gamma f_h^\ell (\xi_h^\ell-c)\,dA,
  \qquad
  J_2=\int_\Gamma(f_h^\ell-f)(\xi_h^\ell-c)\,dA, 
  \\
  J_3=& \int_\Gamma\bgradg(u-\phi_h^\ell)\cdot\bgradg\xi_h^\ell\,dA,
  \qquad
  J_4=\int_\Gamma\bgradg\phi_h^\ell\cdot\bgradg\xi_h^\ell\,dA
  -\int_{\Gamma_h}\bgradgh\phi_h\cdot\bgradgh\xi_h\,dA_h.
\end{alignat*}
By applying~\eqref{incfun} and the Poincar\'e's inequality~\eqref{poincareinq} we get 
\begin{equation*}
J_1 \le C \, c_p \Psi_h  \|f_h^\ell\|_{L^2(\Gamma)} \| \bgradg \xi_h^\ell \|_{L^2(\Gamma)} ,
\end{equation*}
where we chose $c=\frac1{|\Gamma|}\int_\Gamma\xi_h^\ell\,dA$. Using the Cauchy-Schwarz inequality and the Poincar\'e's inequality~\eqref{poincareinq} we have 
\begin{equation*}
J_2 \le  c_p\,  \|f_h^\ell-f\|_{L^2(\Gamma)}\,  \| \bgradg \xi_h^\ell \|_{L^2(\Gamma)} .
\end{equation*}
Using the  Cauchy-Schwarz inequality we get 
\begin{equation*}
J_3 \le \| \bgradg(u-\phi_h^\ell) \|_{L^2(\Gamma)} \,  \| \bgradg \xi_h^\ell \|_{L^2(\Gamma)} .
\end{equation*}
Using \eqref{incgrad} gives 
\begin{equation*}
J_4 \le C \, \Psi_h \|\bgradg \phi_h^\ell \|_{L^2(\Gamma)}  \| \bgradg \xi_h^\ell \|_{L^2(\Gamma)} 
\end{equation*}
Hence, combining the above results we get
\begin{multline*}
  \|\bgradg \xi_h^\ell\|_{L^2(\Gamma)}
  \le Cc_p(\Psi_h\|f\|_{L^2(\Gamma)}+\|f_h^\ell-f\|_{L^2(\Gamma)})
  \\
  +C(\|\bgradg(u-\phi_h^\ell)\|_{L^2(\Gamma)}+\Psi_h\|\bgradg\phi_h^\ell\|_{L^2(\Gamma)}),
\end{multline*}
where we used that  $\|f_h^\ell\|_{L^2(\Gamma)} \le  \|f\|_{L^2(\Gamma)}$.
If we use the triangle inequality and~\eqref{energy} we obtain 
\begin{equation*}
  \|\bgradg \phi_h^\ell\|_{L^2(\Gamma)}
  \le\|\bgradg (u- \phi_h^\ell) \|_{L^2(\Gamma)}+c_p\|f\|_{L^2(\Gamma)}.
\end{equation*}
The result now follows after taking the minimum over $\phi_h\in S_h$ and use the fact that $\Psi_h\le1$ which follows from~\eqref{suff}.
\end{proof}

Let $I_{h,\ell}$ be the standard Lagrange interpolant in $\Gamma_h$ onto $S_h$ and define $I_h\eta^\ell=(I_{h,\ell}\eta)^\ell\in S_h^\ell$.  We then have the following estimate.
\begin{lemma}\label{l:interpolant}
Let $u$ be the solution of~\eqref{e:contin_laplace_beltrami}. Then,
\begin{equation*}
  \|\bgradg(u-I_h u)\|_{L^2(\Gamma)}
  \le Cc_p(\Lambda_h+\Psi_h)\|f\|_{L^2(\Gamma)}
  +\biggl(\sum_{T\in\trih}h_T^2\|\Hessian u\|_{L^2(T^\ell)}^2\biggr)^{1/2},
\end{equation*}
where $\Lambda_h=\max_{T\in\trih}h_T^3\gamma_T\kappa_T$ and $\gamma_T$ was defined in Lemma~\ref{l:H2}.
\end{lemma}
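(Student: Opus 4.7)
The plan is to transfer the problem to the flat triangles of $\trih$, apply a classical Lagrange interpolation estimate there, and then pull everything back to $\Gamma$ using the estimates already proved.

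First, set $\tilde u=u_\ell$ on $\Gamma_h$, so by definition $(I_h u)_\ell = I_{h,\ell}\tilde u$ and consequently $(u-I_h u)_\ell = \tilde u - I_{h,\ell}\tilde u$. Applying Lemma~\ref{l:inqeta} to $\eta = \tilde u - I_{h,\ell}\tilde u$ on each $T\in\trih$ yields
\begin{equation*}
  \|\bgradg(u-I_h u)\|_{L^2(T^\ell)}
  \le C\|\bgradgh(\tilde u - I_{h,\ell}\tilde u)\|_{L^2(T)}.
\end{equation*}
Since $T$ is a flat shape-regular triangle and $I_{h,\ell}\tilde u$ is the standard Lagrange linear interpolant of $\tilde u$ at its vertices, the usual Bramble--Hilbert-type bound (applied in local coordinates on $T$ as in~\cite{MR2050138}) gives
\begin{equation*}
  \|\bgradgh(\tilde u - I_{h,\ell}\tilde u)\|_{L^2(T)} \le C h_T \|\grad_{\Gamma_h}^2 \tilde u\|_{L^2(T)}.
\end{equation*}

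Next I invoke Lemma~\ref{l:H2} with $\eta=\tilde u$ (so $\eta^\ell = u$) to replace the tangential Hessian on $\Gamma_h$ by the one on $\Gamma$:
\begin{equation*}
  h_T \|\grad_{\Gamma_h}^2\tilde u\|_{L^2(T)}
  \le C h_T \|\Hessian u\|_{L^2(T^\ell)}
  + C(h_T^2\kappa_T^2 + h_T^3\kappa_T\gamma_T)\|\bgradg u\|_{L^2(T^\ell)}.
\end{equation*}
By the definitions $\Psi_h = \max_T h_T^2\kappa_T^2$ and $\Lambda_h = \max_T h_T^3\kappa_T\gamma_T$, the prefactor on the second term is bounded by $\Psi_h+\Lambda_h$ uniformly in $T$.

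Squaring, summing over $T\in\trih$, and taking square roots then produces
\begin{equation*}
  \|\bgradg(u-I_h u)\|_{L^2(\Gamma)}
  \le C\biggl(\sum_{T\in\trih} h_T^2 \|\Hessian u\|_{L^2(T^\ell)}^2\biggr)^{1/2}
  + C(\Lambda_h+\Psi_h)\|\bgradg u\|_{L^2(\Gamma)}.
\end{equation*}
Finally, the energy estimate $\|\bgradg u\|_{L^2(\Gamma)} \le c_p\|f\|_{L^2(\Gamma)}$ from Lemma~\ref{l:energy} absorbs the last term into the claimed $Cc_p(\Lambda_h+\Psi_h)\|f\|_{L^2(\Gamma)}$. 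The only non-routine step is matching the two Hessian expressions through Lemma~\ref{l:H2}, which is where the lower order terms involving $\kappa_T$ and $\gamma_T$ (and hence the $\Lambda_h+\Psi_h$ factor) enter; everything else is a standard flat interpolation estimate combined with the pullback equivalences already established.
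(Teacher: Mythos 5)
Your proposal is correct and follows the paper's proof essentially verbatim: lift via Lemma~\ref{l:inqeta}, apply the flat Lagrange interpolation estimate on each $T$, convert the tangential Hessian on $\Gamma_h$ back to $\Gamma$ via Lemma~\ref{l:H2}, then sum and invoke the energy estimate~\eqref{energy}. Nothing to add.
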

\begin{proof}
Recall that $u_\ell$ is the pullback lift of $u$. Using Lemma~\ref{l:inqeta}, and approximation properties of the Lagrange interpolant, we have
\begin{equation*}
  \| \bgradg(u-I_h u)\|_{L^2(T^\ell)}
  \le C\|\bgradgh(u_\ell-I_{h,\ell}u_\ell)\|_{L^2(T)}
  \le C h_T \|\grad_{\Gamma_h}^2u_\ell\|_{L^2(T)}.
\end{equation*}
We get from Lemma~\ref{l:H2} that
\begin{equation}\label{e:interp}
  \|\bgradg(u-I_h u)\|_{L^2(T^\ell)}
  \le C(h_T\|\Hessian u\|_{L^2(T^\ell)}
  +(h_T^2\kappa_T^2+h_T^3\kappa_T\gamma_T)\|\bgradg u\|_{L^2(T^\ell)}).
\end{equation}
The result follows easily by summing over $T$ and applying~\eqref{energy}.
\end{proof}

We can combine Theorem~\ref{t:quasi} and Lemma~\ref{l:interpolant} to get the following theorem.
\begin{theorem}\label{t:globalest}
Assume that the hypothesis of Theorem~\ref{t:quasi} holds. Then,
\begin{equation}\label{apriori}
  \|\bgradg(u-u_h^\ell)\|_{L^2(\Gamma)}
  \le Cc_p\bigl((\Lambda_h+\Psi_h)\|f\|_{L^2(\Gamma)}+\|f-f^\ell_h\|_{L^2(\Gamma)}\bigr)
  +C\biggl(\sum_{T\in\trih}h_T^2\|\Hessian u\|_{L^2(T^\ell)}^2\biggr)^{1/2}.
\end{equation}
\end{theorem}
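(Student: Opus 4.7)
The plan is to combine the two preceding results in the most direct way, since the statement is essentially advertised as a corollary. First I apply Theorem~\ref{t:quasi}, which already bounds $\|\bgradg(u-u_h^\ell)\|_{L^2(\Gamma)}$ by the best approximation error $\min_{\phi_h \in S_h}\|\bgradg(u-\phi_h^\ell)\|_{L^2(\Gamma)}$ plus the geometric consistency contribution $Cc_p\bigl(\Psi_h\|f\|_{L^2(\Gamma)}+\|f-f_h^\ell\|_{L^2(\Gamma)}\bigr)$. That second piece already matches one of the two families of terms appearing in~\eqref{apriori}.

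Next I need to control the minimum from above. The simplest choice is to specialize $\phi_h$ to the Lagrange interpolant, namely $\phi_h := I_{h,\ell} u_\ell \in S_h$, so that $\phi_h^\ell = I_h u \in S_h^\ell$. With this choice the best approximation error is dominated by the interpolation error $\|\bgradg(u - I_h u)\|_{L^2(\Gamma)}$, to which Lemma~\ref{l:interpolant} applies verbatim and supplies both the $Cc_p\Lambda_h \|f\|_{L^2(\Gamma)}$ factor and the Hessian-sum term $C\bigl(\sum_{T \in \trih} h_T^2 \|\Hessian u\|_{L^2(T^\ell)}^2\bigr)^{1/2}$.

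Putting the two estimates together yields the claimed bound once one notices that the $Cc_p\Psi_h\|f\|_{L^2(\Gamma)}$ contributions arising separately from Theorem~\ref{t:quasi} and from Lemma~\ref{l:interpolant} can be merged with the $Cc_p\Lambda_h\|f\|_{L^2(\Gamma)}$ piece into the single prefactor $Cc_p(\Lambda_h+\Psi_h)\|f\|_{L^2(\Gamma)}$ on the right-hand side of~\eqref{apriori}. There is no genuine obstacle at this stage: all the substantive work is already packaged inside Theorem~\ref{t:quasi} (Galerkin consistency on $\Gamma_h$ versus $\Gamma$, via Lemma~\ref{inconsistency} and Lemma~\ref{l:inqeta}) and inside Lemma~\ref{l:interpolant} (standard Lagrange interpolation on $T$ combined with the lifted-Hessian bound from Lemma~\ref{l:H2}). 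The present theorem is a one-line concatenation, and the only care required is the bookkeeping of the geometric constants $\Lambda_h$ and $\Psi_h$, which has already been done explicitly in the lemmas feeding into it.
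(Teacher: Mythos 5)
Your proposal is correct and is exactly the paper's (implicit) argument: the paper presents Theorem~\ref{t:globalest} as a direct combination of Theorem~\ref{t:quasi} and Lemma~\ref{l:interpolant}, with the minimum over $S_h$ bounded by taking $\phi_h$ to be the Lagrange interpolant (up to subtracting its mean, which does not affect the gradient). The bookkeeping of $\Lambda_h$ and $\Psi_h$ in your write-up matches the stated bound.
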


\section{Graded meshes for two subdomains}\label{s:mesh}
In this section we consider the case where there is a high curvature region $\Gamma_1\subsetneq\Gamma$, and $\Gamma_2=\Gamma\backslash\overline{\Gamma}_1$. We let $\kappa^{(i)}=\|H\|_{L^\infty(\Gamma_i)}$ and assume that $\kappa^{(1)}\gg\kappa^{(2)}$. We also let $\kappa=\|H\|_{L^\infty(\Gamma)}=\kappa^{(1)}$. We use our above results from the previous sections and local regularity results in order to grade a mesh so that the error is balanced.

We start by just stating the global regularity result which is found in~\cite{MR3038698}. We note that this result does not fit our graded meshing strategy; instead, we establish Lemma~\ref{l:regul}.

\begin{lemma}\label{l:globalregularity}
Assume that $\Gamma$ is a $C^3$ orientable compact surface without boundary, and that $u\in\Ho$ solves~\eqref{e:contin_laplace_beltrami}. Then $u\in H^2(\Gamma)$, and there exists a constant $C$ that is independent of the curvatures of $\Gamma$ such that
\begin{equation*}
  \|\Hessian u\|_{L^2(\Gamma)}\le(1+Cc_p\kappa)\|f\|_{L^2(\Gamma)}.
\end{equation*}
\end{lemma}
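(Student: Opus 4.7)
The plan is to prove this via the Bochner--Weitzenb\"ock identity on the closed surface $\Gamma$, reducing the $H^2$ norm of $u$ to the $L^2$ norm of $f$ plus a curvature-controlled lower order term, then finish with Lemma~\ref{l:energy}. The first step is to establish that $u$ actually lies in $H^2(\Gamma)$ at all. Since $\Gamma$ is a $C^3$ compact surface without boundary, I would cover it by a finite atlas of $C^3$ charts and a subordinate partition of unity; in each chart the Laplace--Beltrami operator is a uniformly elliptic second order operator with $C^1$ coefficients whose derivatives are controlled by the derivatives of the metric (hence by $\kappa$). Standard interior elliptic regularity (Agmon--Douglis--Nirenberg) then upgrades the weak solution of $-\lap_\Gamma u=f$ to $H^2$. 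Once this qualitative regularity is in hand the work is to produce a \emph{quantitative} bound with the sharp dependence on $\kappa$.

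For the quantitative step, I would use the pointwise Bochner identity valid for any smooth function on a Riemannian 2-surface with Gaussian curvature $K$:
\begin{equation*}
  \tfrac12\lap_\Gamma|\bgradg u|^2
  =|\Hessian u|^2+\bgradg u\cdot\bgradg(\lap_\Gamma u)+K\,|\bgradg u|^2.
\end{equation*}
Integrating over the closed surface $\Gamma$ the left hand side vanishes by the divergence theorem, and integration by parts on the middle term together with the PDE $\lap_\Gamma u=-f$ gives $\int_\Gamma\bgradg u\cdot\bgradg\lap_\Gamma u\,dA=-\int_\Gamma(\lap_\Gamma u)^2\,dA=-\|f\|_{L^2(\Gamma)}^2$. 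Rearranging,
\begin{equation*}
  \|\Hessian u\|_{L^2(\Gamma)}^2
  =\|f\|_{L^2(\Gamma)}^2-\int_\Gamma K\,|\bgradg u|^2\,dA.
\end{equation*}

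Since the Gaussian curvature is the product of principal curvatures $K=k_1k_2$, the definition of $\kappa$ in~\eqref{kappa} yields $\|K\|_{L^\infty(\Gamma)}\le\kappa^2$. Bounding the curvature term by $\kappa^2\|\bgradg u\|_{L^2(\Gamma)}^2$ and invoking the energy estimate~\eqref{energy} from Lemma~\ref{l:energy} gives
\begin{equation*}
  \|\Hessian u\|_{L^2(\Gamma)}^2
  \le\|f\|_{L^2(\Gamma)}^2+\kappa^2c_p^2\|f\|_{L^2(\Gamma)}^2
  =\bigl(1+(c_p\kappa)^2\bigr)\|f\|_{L^2(\Gamma)}^2.
\end{equation*}
Taking square roots and using $\sqrt{1+x^2}\le 1+x$ for $x\ge0$ yields the claimed bound $(1+Cc_p\kappa)$, with a constant independent of $\Gamma$ because it arises only from the universal Bochner formula and the elementary inequality.

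The main obstacle is really the first step: making the Bochner calculation rigorous for a solution that is only known \emph{a priori} to be in $H^1$. I would handle this by a density argument, first proving the identity for smooth data via the chart-based $H^2$ regularity above (where coefficient derivatives are controlled by $\kappa$ uniformly), and then passing to the limit using mollification on the surface; the key is that the constant in the $H^2$ bound obtained from the Bochner argument is \emph{better} than the one available from chart-based elliptic estimates, so the $\kappa$-independent prefactor is preserved in the limit. Everything else is straightforward integration by parts and a routine application of Lemma~\ref{l:energy}.
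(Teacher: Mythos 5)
The paper does not actually prove this lemma; it is quoted from~\cite{MR3038698}, and the underlying identity is the $\rho\equiv1$ case of the paper's Appendix Lemma~\ref{l:intbyparts}. Your Bochner--Weitzenb\"ock route is a legitimate alternative and the overall architecture (qualitative $H^2$ regularity by charts, a global integration-by-parts identity, then the energy estimate~\eqref{energy}) is the right one. But there is one concrete gap: your central identity
\begin{equation*}
  \|\Hessian u\|_{L^2(\Gamma)}^2=\|f\|_{L^2(\Gamma)}^2-\int_\Gamma K\,|\bgradg u|^2\,dA
\end{equation*}
is not correct for the object the lemma bounds. In this paper $\Hessian u$ is the $3\times3$ matrix of tangential second derivatives $\dD_i\dD_j u$, which is \emph{not} the intrinsic covariant Hessian appearing in the Bochner formula: it is not even symmetric (see~\eqref{app2}, $\dD_{ij}u=\dD_{ji}u+(H\bgradg u)_j\nu_i-(H\bgradg u)_i\nu_j$), and it carries normal components of size $|H\bgradg u|$. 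The correct global identity, obtained in Lemma~\ref{l:intbyparts} with $\rho\equiv1$, replaces your Gaussian-curvature term $K=k_1k_2$ by the tangential operator $\tr(H)H-2H^2$, whose eigenvalues on the tangent space are $k_1k_2-k_i^2$; the discrepancy with your formula is exactly the $\|H\bgradg u\|_{L^2}^2$ contribution of the normal part of $\dD_i\dD_j u$ that your computation drops.

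The good news is that the repair is one line: the extra term is bounded by $C\kappa^2\|\bgradg u\|_{L^2(\Gamma)}^2$, which is of the same order as the term you already control, so the final estimate $\bigl(1+Cc_p\kappa\bigr)\|f\|_{L^2(\Gamma)}$ survives unchanged. To make the argument airtight you should either (i) work directly with the identity of Lemma~\ref{l:intbyparts}, which is formulated for the paper's $\dD_{ij}u$ and already tracks the curvature through $\tr(H)H-2H^2$, or (ii) keep the intrinsic Bochner identity but explicitly decompose $\dD_i\dD_j u$ into its covariant part plus the $H\bgradg u$ terms before taking norms. Your remaining steps (the bound $\|K\|_{L^\infty(\Gamma)}\le C\kappa^2$, the use of~\eqref{energy}, and $\sqrt{1+x^2}\le1+x$) are fine, as is the density argument for passing from smooth $u$ to $u\in H^2(\Gamma)$.
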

Applying Theorem~\ref{t:globalest} and Lemma~\ref{l:globalregularity} it  easily follows that 
\begin{equation}\label{nogood1}
\|\bgradg(u-u_h^\ell)\|_{L^2(\Gamma)}
\le Cc_p\|f-f^\ell_h\|_{L^2(\Gamma)}+C\bigl(h+c_p(\Lambda_h+\Psi_h+\kappa h)\bigr)\|f\|_{L^2(\Gamma)}.
\end{equation}
Then, requiring $\Lambda_h+\Psi_h\le h(1+\kappa)$ we get
\begin{equation}\label{nogood}
\|\bgradg(u-u_h^\ell)\|_{L^2(\Gamma)}
\le Cc_p\|f-f_h\|_{L^2(\Gamma)}+Ch\bigl(1+c_p(1+\kappa)\bigr)\|f\|_{L^2(\Gamma)}.
\end{equation}

Of course, this is not a very good estimate in the case $\kappa=\kappa^{(1)}\gg\kappa^{(2)}$ since we would have the mesh size far away from $\Gamma_1$ still depending on $\kappa^{(1)}$. Instead, we would like to have the mesh size
to only depend on $\kappa^{(2)}$ a distant order one away from $\Gamma_1$. In order to do this we will need a local regularity result, as follows.
\begin{lemma}[Weighted-$H^2$ regularity]~\label{l:regul}
Let $u\in H^2(\Gamma)$ be the solution of~\eqref{e:contin_laplace_beltrami}, and consider the subset  $\Gamma_2\subseteq\Gamma$. Let $\rho\in W^{1,\infty}(\Gamma)$ be such that $\supp\rho\subseteq\Gamma_2$. Then there is an universal constant $C$ such that
\begin{equation}\label{localregularity}
  \|\rho\Hessian u\|_{L^2(\Gamma_2)}
  \le C\|\rho\|_{W^{1,\infty}(\Gamma)}\bigl(1+c_p(1+\kappa^{(2)})\bigr)\|f\|_{L^2(\Gamma)}.
\end{equation}
\end{lemma}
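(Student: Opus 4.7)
The plan is to localize the proof of the global regularity result (Lemma~\ref{l:globalregularity}) by inserting the weight $\rho^2$, so that the curvature only enters through $\kappa^{(2)}=\|H\|_{L^\infty(\Gamma_2)}$. The main tool is the Bochner/Weitzenb\"ock identity on the surface $\Gamma$, which for our two--dimensional orientable case reads
\begin{equation*}
  \tfrac12\Delta_\Gamma|\bgradg u|^2
  =|\Hessian u|^2+\langle\bgradg u,\bgradg\Delta_\Gamma u\rangle+K\,|\bgradg u|^2,
\end{equation*}
where $K$ is the Gaussian curvature. Since the principal curvatures are bounded by $\kappa_T$, one has $|K|\le C(\kappa^{(2)})^2$ on the support of $\rho$.

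First I would multiply this identity by $\rho^2$ and integrate over $\Gamma$. Since $\Gamma$ has no boundary, integration by parts on $\int \rho^2\Delta_\Gamma|\bgradg u|^2$ produces the term $-2\int\rho(\bgradg\rho)\cdot\bgradg|\bgradg u|^2 = -4\int\rho\,(\Hessian u\,\bgradg u)\cdot\bgradg\rho$. For the term $\int\rho^2\langle\bgradg u,\bgradg\Delta_\Gamma u\rangle$ I would integrate by parts and use $-\Delta_\Gamma u=f$ to rewrite it as
\begin{equation*}
  -\int_\Gamma\rho^2(\Delta_\Gamma u)^2-2\int_\Gamma\rho(\bgradg\rho\cdot\bgradg u)\Delta_\Gamma u
  =-\int_\Gamma\rho^2 f^2+2\int_\Gamma\rho(\bgradg\rho\cdot\bgradg u)f.
\end{equation*}
Collecting everything yields an identity of the form
\begin{equation*}
  \int_\Gamma\rho^2|\Hessian u|^2
  =\int_\Gamma\rho^2 f^2-2\int_\Gamma\rho(\bgradg\rho\cdot\bgradg u)f
  -2\int_\Gamma\rho\,(\Hessian u\,\bgradg u)\cdot\bgradg\rho
  -\int_\Gamma\rho^2 K|\bgradg u|^2.
\end{equation*}

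Next I would estimate each term by Cauchy--Schwarz. The dangerous one is the third, which contains $\Hessian u$ again; here I use $2ab\le\tfrac12 a^2+2b^2$ with $a=\rho|\Hessian u|$ and $b=|\bgradg\rho||\bgradg u|$ to absorb $\tfrac12\|\rho\Hessian u\|_{L^2(\Gamma)}^2$ on the left. The curvature term is bounded using $\supp\rho\subseteq\Gamma_2$ and $|K|\le C(\kappa^{(2)})^2$ on $\Gamma_2$, producing the factor $(\kappa^{(2)})^2$ and not $\kappa$. After absorption this gives
\begin{equation*}
  \|\rho\Hessian u\|_{L^2(\Gamma_2)}^2
  \le C\|\rho\|_\infty^2\|f\|_{L^2(\Gamma)}^2
  +C\|\rho\|_{W^{1,\infty}(\Gamma)}^2\|\bgradg u\|_{L^2(\Gamma)}^2
  +C(\kappa^{(2)})^2\|\rho\|_\infty^2\|\bgradg u\|_{L^2(\Gamma)}^2.
\end{equation*}
Finally, apply the energy estimate from Lemma~\ref{l:energy}, $\|\bgradg u\|_{L^2(\Gamma)}\le c_p\|f\|_{L^2(\Gamma)}$, and take square roots to obtain \eqref{localregularity}.

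The main obstacle is the weighted integration by parts: one has to show that the commutator between $\rho$ and the Hessian (which produces terms of the form $\rho(\Hessian u)\bgradg\rho$) can indeed be absorbed rather than iterated, and that the localization argument is clean, so that only curvatures on $\supp\rho\subseteq\Gamma_2$, i.e. $\kappa^{(2)}$, appear in the final bound. A minor technical point is that the Bochner identity requires $u\in H^3_{\mathrm{loc}}$ for a pointwise statement; this is handled by a standard density argument (approximate $u$ in $H^2(\Gamma)$, where the identity is known from Lemma~\ref{l:globalregularity}, by smooth functions and pass to the limit in the integrated identity).
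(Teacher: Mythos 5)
Your argument is correct in substance but follows a genuinely different route from the paper's. The paper stays entirely within the extrinsic tangential calculus: it expands $\dD_i(\rho^2\dD_j u)=2\rho\,\dD_i\rho\,\dD_ju+\rho^2\dD_{ij}u$, absorbs the cross term by Young's inequality, and then invokes a separately proved commutation identity (Lemma~\ref{l:intbyparts}, built from $\int_\Gamma\dD_iu\,v\,dA=-\int_\Gamma u\dD_iv\,dA+\int_\Gamma uv\tr(H)\nu_i\,dA$ and $\dD_{ij}u=\dD_{ji}u+(H\bgradg u)_j\nu_i-(H\bgradg u)_i\nu_j$), which produces the curvature term $-\int_{\Gamma_2}\rho^2\bigl(\tr(H)H-2H^2\bigr)\bgradg u\cdot\bgradg u\,dA$. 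You instead use the intrinsic Bochner formula, whose curvature term involves the Gaussian curvature $K$; the localization by $\rho^2$, the absorption of the dangerous term $\rho(\Hessian u\,\bgradg u)\cdot\bgradg\rho$, and the final appeal to the energy estimate~\eqref{energy} are the same in both proofs, and in both cases $\supp\rho\subseteq\Gamma_2$ is what guarantees that only $\kappa^{(2)}$ enters. The one point you should make explicit is that the paper's $\Hessian u$ is the full $3\times3$ matrix $(\dD_i\dD_ju)$, which is neither symmetric nor purely tangential, whereas the Hessian in the Bochner identity is the covariant (intrinsic) one; on $\supp\rho$ they differ by second-fundamental-form contributions of size $|H\bgradg u|\le\kappa^{(2)}|\bgradg u|$ (indeed the tangential--normal part of $\dD^2u$ is $-(H\bgradg u)_i\nu_j$), so converting your bound into~\eqref{localregularity} costs an additional term of order $\kappa^{(2)}\|\rho\|_{L^\infty(\Gamma)}c_p\|f\|_{L^2(\Gamma)}$, which is already of the allowed size. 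Your route buys brevity and a cleaner geometric statement (only $K$ appears); the paper's route buys consistency with the extrinsic definition of $\Hessian u$ used throughout the error analysis and requires only the two elementary surface-calculus identities above, with the same $C^3$-density argument that you also need to justify the pointwise Bochner identity.
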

\begin{proof}
Note that $\dD_i(\rho^2\dD_ju)=2\rho \dD_i\rho \dD_ju+\rho^2\dD_{ij}u$, and
\begin{alignat*}{1}
  \int_{\Gamma_2}\rho^2\dD_{ij}u\dD_{ij}u\,dA
  =&\int_{\Gamma_2}\dD_{ij}u\dD_i(\rho^2\dD_ju)\,dA
  -2\int_{\Gamma_2}\rho \dD_{ij}u\dD_i\rho \dD_ju\,dA
  \\
  \le & \int_{\Gamma_2}\dD_{ij}u\dD_i(\rho^2\dD_ju)\,dA
  +2\|\bgradg\rho\|_{L^\infty(\Gamma)}\|\rho \dD_{ij}u\|_{L^2(\Gamma_2)}\|\dD_ju\|_{L^2(\Gamma_2)}
  \\
  \le& \int_{\Gamma_2}\dD_{ij}u\dD_i(\rho^2\dD_ju)\,dA
  +2\|\bgradg\rho\|_{L^\infty(\Gamma)}^2\|\dD_ju\|_{L^2(\Gamma_2)}^2
  +\frac12\|\rho \dD_{ij}u\|_{L^2(\Gamma_2)}^2.
\end{alignat*}
Then
\[
\int_{\Gamma_2}\rho^2\dD_{ij}u\dD_{ij}u\,dA
\le2\int_{\Gamma_2}\dD_{ij}u\dD_i(\rho^2\dD_ju)\,dA+4|\bgradg\rho|_{L^\infty(\Gamma)}^2\|\dD_ju\|_{L^2(\Gamma_2)}^2.
\]
From Lemma~\ref{l:intbyparts},
\begin{alignat*}{1}
  &\sum_{i,j=1}^3\int_{\Gamma_2}\dD_{ij}u\dD_i(\rho^2\dD_ju)\,dA \\
   & =\sum_{j=1}^3\int_{\Gamma_2}\lap_\Gamma u\dD_j(\rho^2\dD_ju)\,dA -\int_{\Gamma_2}\rho^2\bigl(\tr(H)H-2H^2\bigr)\bgradg u\cdot\bgradg u\,dA
  \\
 &  \le\int_{\Gamma_2}\lap_\Gamma u(2\rho\bgradg\rho\cdot\bgradg u+\rho^2\lap_\Gamma u)\,dA
  +C(\kappa^{(2)})^2\int_{\Gamma_2}\rho^2|\bgradg u|^2\,dA
  \\
  & \le C\bigl(\|\bgradg\rho\|_{L^\infty(\Gamma)}\|\rho\lap_\Gamma u\|_{L^2(\Gamma_2)}\|\bgradg u\|_{L^2(\Gamma_2)}
  +\|\rho\lap_\Gamma u\|_{L^2(\Gamma_2)}^2 \\
  & \qquad +(\kappa^{(2)})^2\|\rho\|_{L^\infty(\Gamma)}^2\|\bgradg u\|_{L^2(\Gamma_2)}^2\bigr).
\end{alignat*}
Thus,
\begin{equation*}
\sum_{i,j=1}^3\int_{\Gamma_2}(\rho \dD_{ij}u)^2\,dA
\le C\bigl(\|\rho\lap_\Gamma u\|_{L^2(\Gamma_2)}^2+(\|\bgradg\rho\|_{L^\infty(\Gamma)}+\kappa^{(2)}\|\rho\|_{L^\infty(\Gamma)})^2\|\bgradg u\|_{L^2(\Gamma_2)}^2\bigr).
\end{equation*}
The result now follows after using the energy estimate~\eqref{energy}.
\end{proof}
Lemma~\ref{l:regul} holds with a generic function $\rho$, but we will apply the result with $\rho(\bx)=\dist(\bx,\Gamma_1)$ which is a 1-Lipschitz function.

\subsection{The graded mesh}\label{s:graded_mesh}
We start by defining $d_1=\bigl(1+c_p(1+\kappa^{(2)})\bigr)/(1+c_p\kappa^{(1)})$. We then define  the region $\D_1=\{\bx\in\Gamma:\dist(\Gamma_1,\bx)\le d_1\}$. We also let $\tilde{\mathcal{T}}_h^1=\{T\in\mathcal{T}_h:\,T^\ell\cap\D_1\ne\emptyset\}$ and define $\tilde{\mathcal{T}}_h^2=\{T\in\trih:\,T\notin\tilde{\mathcal{T}}_h^1\}$.  We set $h_1 =hd_1$. Finally, we let $\rho(\bx)= \dist(\bx, \Gamma_1)$ and also set $\rho_T=\dist(T^\ell, \Gamma_1)$ for all $T \in \mathcal{T}_h$, where $\rho(\cdot)$ is defined using the geodesic distance~\cite{MR1941909} and $\|\nabla\rho\|_{L^\infty(\Gamma_2)}\le1$.

Our graded mesh will then satisfy:
\begin{labeling}{M3\quad}
\item[(M1)]$\Lambda_h+\Psi_h\le h(1+\kappa^{(2)})$
\item[(M2)]$h_T\le h_1$ for every $T\in\tilde{\mathcal{T}}_h^1$
\item[(M3)]$h_T\le \min\{\rho_T,1\} h$ for every $T\in\tilde{\mathcal{T}}_h^2$
\end{labeling}
We recall that $\Psi_h$ and $\Lambda_h$ were respectively defined in Theorem~\ref{t:quasi} and Lemma~\ref{l:interpolant}.

A few comments are in order. First, note that condition (M1) is completely local, and in the case $O(\kappa^{(1)})=O(\kappa^{(2)})$, condition (M1) would be necessary to get an estimate of the form~\eqref{nogood}, as the argument above~\eqref{nogood} shows.

Note that if $O(\kappa^{(1)})=O(\kappa^{(2)})$ then $O(h_1)=O(h)$. The mesh size for triangles that are unit distance from $\Gamma_1$ (i.e. $\rho_T=O(1)$) can be chosen so that $O(h_T)=O(h)$. In the intermediate region a grading giving by (M3) needs to be satisfied. Finally, note that there is a smooth transition for triangles in the border of $\D_1$. Indeed, if $\rho_T=d_1$ then by (M3), $h_T \le d_1 h=h_1 $

We now state and prove our main result.
\begin{theorem}\label{t:meshdistrib}
  Suppose that $u\in\Ho$ solves~\eqref{e:contin_laplace_beltrami} and $u_h\in S_h$ solves~\eqref{e:FEM}. Assume that the mesh satisfies~\rm{(M1-M3)}. Then we have
\begin{equation}\label{e:yesgood}
  \|\bgradg(u-u_h^\ell)\|_{L^2(\Gamma)}
  \le C\bigl(1+c_p(1+\kappa^{(2)})\bigr)h\|f\|_{L^2(\Gamma)}+Cc_p\|f-f^\ell_h\|_{L^2(\Gamma)}.
\end{equation}
\end{theorem}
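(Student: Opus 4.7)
The plan is to substitute the graded mesh conditions (M1)--(M3) into the abstract a-priori bound \eqref{apriori} of Theorem~\ref{t:globalest} and dispatch each of its terms in turn. The data term $\|f-f_h^\ell\|_{L^2(\Gamma)}$ passes through untouched. Condition (M1) converts $\Lambda_h+\Psi_h$ directly into $h(1+\kappa^{(2)})$, which is one of the two factors we want on the right-hand side of \eqref{e:yesgood} and, crucially, already has no $\kappa^{(1)}$ dependence. The heart of the proof is the Hessian sum
\[
S := \sum_{T\in\trih} h_T^2 \|\Hessian u\|_{L^2(T^\ell)}^2,
\]
which I would split as $S = S_1+S_2$ corresponding to the refined belt $\tilde{\mathcal{T}}_h^1$ near $\Gamma_1$ and the coarser region $\tilde{\mathcal{T}}_h^2$, and estimate each piece with a different regularity input.

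For $S_1$, condition (M2) gives $h_T\le h_1$ uniformly, so
\[
S_1 \le h_1^2 \|\Hessian u\|_{L^2(\Gamma)}^2 \le C h_1^2 \bigl(1+c_p\kappa^{(1)}\bigr)^2 \|f\|_{L^2(\Gamma)}^2
\]
by the global regularity Lemma~\ref{l:globalregularity}. The choice $h_1 = h d_1$ with $d_1 = \bigl(1+c_p(1+\kappa^{(2)})\bigr)/(1+c_p\kappa^{(1)})$ is precisely tailored so that $h_1(1+c_p\kappa^{(1)}) = h\bigl(1+c_p(1+\kappa^{(2)})\bigr)$; the $\kappa^{(1)}$ cancels exactly, leaving $S_1^{1/2}\le Ch\bigl(1+c_p(1+\kappa^{(2)})\bigr)\|f\|_{L^2(\Gamma)}$.

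For $S_2$, I would apply Lemma~\ref{l:regul} with $\rho(\bx) = \dist(\bx,\Gamma_1)$, which is $1$-Lipschitz on $\Gamma$, vanishes on $\Gamma_1$, and satisfies $\|\rho\|_{W^{1,\infty}(\Gamma)}\le C$ by compactness of $\Gamma$. For every $T\in\tilde{\mathcal{T}}_h^2$ we have $T^\ell\subseteq\Gamma_2$ and $\rho(\bx)\ge\rho_T$ for $\bx\in T^\ell$. A quick case split shows (M3) gives the pointwise inequality $h_T\le h\rho(\bx)$ on $T^\ell$: if $\rho_T\le 1$ then $h_T\le \rho_T h\le \rho(\bx)h$, and if $\rho_T>1$ then $h_T\le h<\rho(\bx)h$. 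Hence
\[
S_2 \le h^2\sum_{T\in\tilde{\mathcal{T}}_h^2}\|\rho\Hessian u\|_{L^2(T^\ell)}^2 \le h^2\|\rho\Hessian u\|_{L^2(\Gamma_2)}^2 \le Ch^2\bigl(1+c_p(1+\kappa^{(2)})\bigr)^2\|f\|_{L^2(\Gamma)}^2
\]
by Lemma~\ref{l:regul}. Combining $S_1^{1/2}+S_2^{1/2}$ with the (M1) bound and the data term gives \eqref{e:yesgood}.

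The main obstacle is not any single estimate but the coordination: the constant $d_1$ must simultaneously (a) make $h_1(1+c_p\kappa^{(1)})$ collapse to $h(1+c_p(1+\kappa^{(2)}))$, so the global regularity result is strong enough on $\tilde{\mathcal{T}}_h^1$, and (b) mark the transition at which (M3) takes over, so that the pointwise inequality $h_T\le h\rho(\bx)$ holds on every $T^\ell$ in $\tilde{\mathcal{T}}_h^2$ and the weighted regularity can absorb the mesh size into the weight. Once this bookkeeping is set up, the two pieces of $S$ fit together cleanly and the curvature $\kappa^{(1)}$ disappears from the final bound.
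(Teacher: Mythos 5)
Your proposal is correct and follows essentially the same route as the paper: substitute (M1) into \eqref{apriori}, split the Hessian sum over $\tilde{\mathcal{T}}_h^1$ and $\tilde{\mathcal{T}}_h^2$, use (M2) with the global regularity Lemma~\ref{l:globalregularity} and the cancellation built into $d_1$ on the first piece, and use (M3) with the weighted regularity Lemma~\ref{l:regul} (via $\rho\ge\rho_T$ on $T^\ell$ and $h_T/\rho_T\le h$) on the second. The only difference is cosmetic: your explicit case split on $\rho_T\lessgtr1$ spells out what the paper compresses into the single inequality $h_T^2/\rho_T^2\le h^2$.
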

Before proving the result let us state a few comments. Note that the right hand side of~\eqref{e:yesgood} looks like the right hand side of~\eqref{nogood} with $\kappa^{(2)}$ instead of $\kappa$.  Therefore, with the available information,~\eqref{e:yesgood} is essentially the best result we can hope for. So, we found a graded mesh where one has a fine mesh in the  region where the curvature is high to get the best error estimate.
\begin{proof} (of Theorem~\ref{t:meshdistrib})
By~\eqref{apriori} and our assumption (M1) we have
\begin{equation*}
  \|\bgradg(u-u_h^\ell)\|_{L^2(\Gamma)}
  \le Cc_p\bigl((1+\kappa^{(2)})h\|f\|_{L^2(\Gamma)}+\|f-f^\ell_h\|_{L^2(\Gamma)}\bigr)
  +C\biggl(\sum_{T\in\trih}h_T^2\|\Hessian u\|_{L^2(T^\ell)}^2\biggr)^{1/2}. 
\end{equation*}
Next, we estimate
\[
\sum_{T\in\mathcal{T}_h}h_T^2\|\Hessian u\|_{L^2(T^\ell)}^2
=\sum_{T\in\tilde{\mathcal{T}}_h^1}h_T^2\|\Hessian u\|_{L^2(T^\ell)}^2
+\sum_{T\in\tilde{\mathcal{T}}_h^2}h_T^2\|\Hessian u\|_{L^2(T^\ell)}^2.
\]

By our (M2) we get
\begin{equation*}
\sum_{T\in\tilde{\mathcal{T}}_h^1}h_T^2\|\Hessian u\|_{L^2(T^\ell)}^2
\le Ch_1^2\|\Hessian u\|_{L^2(\Gamma)}^2,
\end{equation*}
and we gather from Lemma~\ref{l:globalregularity} that
\begin{equation*}
\sum_{T\in\tilde{\mathcal{T}}_h^1}h_T^2\|\Hessian u\|_{L^2(T^\ell)}^2
\le Ch_1^2(1+c_p\kappa^{(1)})^2\|f\|_{L^2(\Gamma)}^2
=Ch^2\bigl(1+c_p(1+\kappa^{(2)})\bigr)^2\|f\|_{L^2(\Gamma)}^2.
\end{equation*}

The other term we bound in the following way by using (M3):
\begin{equation*}
 \sum_{T \in \tilde{\mathcal{T}}_h^2} h_T^2 \|\Hessian u\|_{L^2(T^\ell)}^2
\le\sum_{T \in \tilde{\mathcal{T}}_h^2}\frac{h_T^2}{\rho_T^2}\|\rho\Hessian u\|_{L^2(T^\ell)}^2
\le h^2 \|\rho\Hessian u\|_{L^2(\Gamma_2)}^2.
\end{equation*}
Now using~\eqref{localregularity}, we have
\begin{equation*}
\sum_{T \in\tilde{\mathcal{T}}_h^2}h_T^2\|\Hessian u\|_{L^2(T^\ell)}^2
\le Ch^2\bigl(1+c_p(1+\kappa^{(2)})\bigr)^2\|f\|_{L^2(\Gamma)}^2.
\end{equation*}
The result now follows.
\end{proof}

\subsection{An $L^2(\Gamma)$ estimate}\label{ss:L2}
We now derive an error estimate in the $L^2(\Gamma)$ norm, based on the usual duality argument. We note that the conditions (M1-M3) are no longer enough to guarantee a $h^2$ convergence that is independent of $\kappa^{(1)}$. Actually, (M1) is reinforced by imposing that
\begin{labeling}{M3\quad}
\item[(M4)]$\Psi_h\le h^2 (1+ \kappa^{(2)})$
\end{labeling}
\begin{theorem}
Suppose that $u\in\Ho$ solves~\eqref{e:contin_laplace_beltrami} and $u_h\in S_h$ solves~\eqref{e:FEM}. Assume that $f^\ell_h$ satisfies $\|f_h^\ell\|_{L^2(\Gamma)}\le C\|f\|_{L^2(\Gamma)}$, and that the mesh satisfies~\rm{(M1-M4)}. Then we have
\begin{multline*}
  \|u-u_h^\ell\|_{L^2(\Gamma)}
  \le Ch^2\Upsilon\bigl(\Upsilon+c_p+h(1+\kappa^{(2)})(c_p+h\Upsilon)\bigr)\|f\|_{L^2(\Gamma)}
  \\
  +Cc_p(c_p+h\Upsilon)\|f-f^\ell_h\|_{L^2(\Gamma)}.
\end{multline*}
Here $\Upsilon=1+c_p(1+\kappa^{(2)})$. 
\end{theorem}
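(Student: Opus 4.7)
My plan is a standard Aubin--Nitsche duality argument, combining the mesh-graded regularity strategy from the proof of Theorem~\ref{t:meshdistrib} with the geometric consistency inequalities of Lemma~\ref{inconsistency}. Set $e = u - u_h^\ell$. Because $u_h$ has zero mean on $\Gamma_h$ rather than on $\Gamma$, the quantity $\bar e := |\Gamma|^{-1}\intg e\, dA$ is nonzero in general; applying \eqref{incfun} with $v_h = u_h$ and the constant test function, together with $\intgh u_h\, dA_h = 0$, gives $|\bar e|^2|\Gamma| \le C\Psi_h^2 \|u_h^\ell\|_{L^2(\Gamma)}^2$. Under hypothesis (M4) and the energy bound obtained from Theorem~\ref{t:meshdistrib}, this contribution is absorbed into the target, so I only need to estimate $\|e - \bar e\|_{L^2(\Gamma)}$.

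I introduce the dual solution $\phi \in \Ho$ characterized by $\intg \bgradg \phi \cdot \bgradg v\, dA = \intg (e - \bar e) v\, dA$ for all $v \in \Ho$, so that $\|e - \bar e\|_{L^2(\Gamma)}^2 = \intg \bgradg\phi \cdot \bgradg e\, dA$. Repeating the regularity argument of Theorem~\ref{t:meshdistrib} on $\phi$, namely Lemma~\ref{l:globalregularity} on the triangles of $\tilde{\mathcal T}_h^1$ and Lemma~\ref{l:regul} with $\rho(\bx) = \dist(\bx,\Gamma_1)$ on those of $\tilde{\mathcal T}_h^2$, and feeding the result into Lemma~\ref{l:interpolant}, yields $\|\bgradg(\phi - I_h\phi)\|_{L^2(\Gamma)} \le Ch\Upsilon\|e - \bar e\|_{L^2(\Gamma)}$. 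The Galerkin-type splitting $\intg \bgradg\phi\cdot\bgradg e\, dA = \intg\bgradg(\phi - I_h\phi)\cdot\bgradg e\, dA + \intg\bgradg I_h\phi\cdot\bgradg e\, dA$ combined with Cauchy--Schwarz and Theorem~\ref{t:meshdistrib} controls the first summand by $Ch\Upsilon\|e - \bar e\|\bigl(Ch\Upsilon\|f\| + Cc_p\|f - f_h^\ell\|\bigr)$, which after dividing by $\|e - \bar e\|$ accounts for the $Ch^2\Upsilon^2\|f\|$ and $Cc_p h\Upsilon\|f - f_h^\ell\|$ pieces of the target.

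For the remaining piece $\intg \bgradg I_h\phi \cdot \bgradg e\, dA$, I invoke the continuous equation \eqref{e:contin_laplace_beltrami} with test $I_h\phi$ (after subtracting its $\Gamma$-mean, which is legitimate since $\intg f\, dA = 0$) and the discrete equation \eqref{e:FEM} with test $I_{h,\ell}\phi$. The consistency inequalities \eqref{incgrad} and \eqref{incfun} then produce remainders of the three types $\Psi_h\|\bgradg I_h\phi\|\|\bgradg u_h^\ell\|$, $\Psi_h\|f_h^\ell\|\|I_h\phi\|$, and $\|f - f_h^\ell\|\|I_h\phi\|$. Using (M4) to replace $\Psi_h$ by $h^2\Upsilon$, Theorem~\ref{t:meshdistrib} for $\|\bgradg u_h^\ell\|$, and Poincar\'e combined with the earlier interpolation bound on $\phi$ to get $\|\bgradg I_h\phi\| \le C(c_p + h\Upsilon)\|e - \bar e\|$ and $\|I_h\phi\| \le C(c_p^2 + h^2\Upsilon)\|e - \bar e\|$, I find that each of these remainders matches a term in the stated bound.

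The main obstacle is the careful bookkeeping of the order-$\Psi_h$ mean-value discrepancies that arise because the lifts $u_h^\ell$ and $I_h\phi$ of elements of $S_h$ have zero mean on $\Gamma_h$ but not on $\Gamma$, which is exactly why the strengthened hypothesis (M4) is needed; conceptually, however, the essential analytical step is merely the reapplication of the weighted $H^2$ regularity (Lemma~\ref{l:regul}) to the dual solution, mirroring the proof of Theorem~\ref{t:meshdistrib}, and it is this step that prevents $\kappa^{(1)}$ from appearing in the final $L^2$ estimate.
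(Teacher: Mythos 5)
Your proposal is correct and follows essentially the same route as the paper: an Aubin--Nitsche duality argument for the mean-corrected error $u-\tilde u_h^\ell$, with the graded-mesh weighted $H^2$ regularity of Lemma~\ref{l:regul} replayed on the dual solution to obtain $\|\bgradg(v-I_hv)\|_{L^2(\Gamma)}\le Ch\Upsilon\|u-\tilde u_h^\ell\|_{L^2(\Gamma)}$, the geometric consistency remainders handled by Lemma~\ref{inconsistency} together with (M4), and the mean defect $|\Gamma|^{-1}\int_\Gamma u_h^\ell\,dA$ controlled exactly as you describe. The only cosmetic difference is that the paper bounds the zero-mean-shifted test function by $\|I_hv-c\|_{L^2(\Gamma)}\le c_p\|\bgradg I_hv\|_{L^2(\Gamma)}\le Cc_p(c_p+h\Upsilon)\|u-\tilde u_h^\ell\|_{L^2(\Gamma)}$ via Poincar\'e, which is slightly cleaner than your $C(c_p^2+h^2\Upsilon)$ bound and yields the stated constants directly.
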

\begin{proof}
First, let $v\in\Ho$ be the weak solution of 
\begin{equation*}
  -\lap_\Gamma v=u-\tilde{u}_h^\ell\quad\text{on }\Gamma
\end{equation*}
where
\begin{equation*}
 \tilde{u}_h^\ell=u_h^\ell-\frac1{|\Gamma|}\int_\Gamma u_h^\ell\,dA.
\end{equation*}
Then
\begin{alignat*}{2}
\|u-\tilde{u}_h^\ell\|_{L^2(\Gamma)}^2
=& \int_\Gamma\bgradg v\cdot\bgradg(u-\tilde{u}_h^\ell)\,dA \\
=& \int_\Gamma\bgradg(v-I_hv)\cdot\bgradg(u-\tilde{u}_h^\ell)\,dA  \\
&-\int_\Gamma\bgradg I_hv\cdot\bgradg \tilde{u}_h^\ell\,dA
+\int_\Gamma fI_hv\,dA, 
\end{alignat*}
where we used~\eqref{e:contin_laplace_beltrami}. Then, using~\eqref{e:FEM}, the fact that derivatives of constants are zero, and that $\int_\Gamma f\,dA=0=\int_{\Gamma_h}f_h\,dA_h$, we can show that 
\begin{equation*}
\|u-\tilde{u}_h^\ell\|_{L^2(\Gamma)}^2= J_1+J_2+J_3 +J_4,
\end{equation*}
where  
\begin{alignat*}{1}
J_1=&  \int_{\Gamma}  f_h^\ell (I_h v -c) dA-\int_{\Gamma_h} f_h (I_{h} v-c)_{\ell} dA_h \\
J_2=& \int_\Gamma (f-f_h^\ell) (I_h v-c) \,dA \\
J_3=& \int_\Gamma \bgradg (v-I_hv) \cdot \bgradg(u-u_h^\ell)\,dA, \\
J_4=&  \int_{\Gamma_h}\bgradgh ((I_{h}v)_\ell) \cdot \bgradgh u_h\,dA_h
-\int_\Gamma\bgradg I_h v\cdot\bgradg u_h^\ell\,dA. 
\end{alignat*}
Here we choose $c=\frac1{|\Gamma|}\int_\Gamma I_hv\,dA$. Using~\eqref{incfun}, the Poincar\'e's inequality~\eqref{poincareinq}, and that  $\|f_h^\ell\|_{L^2(\Gamma)}\le C\|f\|_{L^2(\Gamma)}$,
\begin{equation*}
J_1\le Cc_p\Psi_h\|f\|_{L^2(\Gamma)}\|\bgradg (I_h v)\|_{L^2(\Gamma)}. 
\end{equation*}
 Using the Poincar\'e's inequality~\eqref{poincareinq} we get 
 \begin{equation*}
J_2\le Cc_p\|f-f_h^\ell\|_{L^2(\Gamma)} \|\bgradg (I_h v)\|_{L^2(\Gamma)}. 
\end{equation*}
Using~\eqref{incgrad} we get 
 \begin{equation*}
J_4\le C\Psi_h\|\bgradg u_h^\ell \|_{L^2(\Gamma)} \|\bgradg (I_h v)\|_{L^2(\Gamma)} . 
\end{equation*}
Using the triangle inequality and the energy estimate~\eqref{energy} we have
 \begin{equation*}
   J_4
   \le C\Psi_h(\|\bgradg(u_h^\ell-u)\|_{L^2(\Gamma)}
   +c_p\|f\|_{L^2(\Gamma)})\|\bgradg(I_h v)\|_{L^2(\Gamma)} .
\end{equation*}
To bound $J_3$ we use the Cauchy-Schwarz inequality
\begin{equation*}
J_3 \le \| \bgradg (u_h^\ell-u) \|_{L^2(\Gamma)}  \, \|\bgradg (I_h v-v)\|_{L^2(\Gamma)} .
\end{equation*}
Using Lemma~\ref{l:interpolant} we get the estimate 
\begin{alignat*}{1}
 \|\bgradg(v-I_hv)\|_{L^2(\Gamma)}^2
\le & C\, c_p^2 \, (\Lambda_h+\Psi_h)^2 \|u-\tilde{u}_h^\ell\|_{L^2(\Gamma)}^2
+\sum_{T\in\trih}h_T^2\|\Hessian v\|_{L^2(T^\ell)}^2.
\end{alignat*}

As we did in the proof of Theorem~\ref{t:meshdistrib} (using (M1-M3)) we can show that 
\begin{equation*}
\sum_{T\in\trih}h_T^2\|\Hessian v\|_{L^2(T^\ell)}^2 \le  C\, h^2 \Upsilon^2  \|u-\tilde{u}_h^\ell\|_{L^2(\Gamma)}^2.
\end{equation*}
Hence, using (M1) we have 
\begin{equation}\label{aux401}
 \|\bgradg(v-I_hv)\|_{L^2(\Gamma)} \le C h\, \Upsilon \, \|u-\tilde{u}_h^\ell\|_{L^2(\Gamma)}.
\end{equation}
Therefore, we get the bound
\begin{equation*}
J_3 \le C h \Upsilon  \| \bgradg (u_h^\ell-u) \|_{L^2(\Gamma)}  \, \|u-\tilde{u}_h^\ell\|_{L^2(\Gamma)}.
\end{equation*}
Using \eqref{aux401}, the triangle inequality and a energy estimate we have 
\begin{equation*}
 \|\bgradg (I_hv)\|_{L^2(\Gamma)} \le C (c_p+h \Upsilon) \, \|u-\tilde{u}_h^\ell\|_{L^2(\Gamma)}.
\end{equation*}
It follows then that 
\begin{alignat*}{1}
J_1+J_2 +J_4 \le&  C (c_p+h \Upsilon) \\
& \quad (c_p\, \Psi_h \|f\|_{L^2(\Gamma)} +c_p \, \|f-f_h^\ell\|_{L^2(\Gamma)}+ \Psi_h \| \bgradg (u_h^\ell-u) \|_{L^2(\Gamma)} )\|u-\tilde{u}_h^\ell\|_{L^2(\Gamma)}.
\end{alignat*}
Therefore, 
\begin{multline*}
  \|u-\tilde{u}_h^\ell\|_{L^2(\Gamma)}
  \le C(c_p+h\Upsilon)(c_p\Psi_h\|f\|_{L^2(\Gamma)}+c_p\|f-f_h^\ell\|_{L^2(\Gamma)})
  \\
  +C\bigl((c_p+h\Upsilon)\Psi_h+h\Upsilon\bigr)\|\bgradg(u_h^\ell-u)\|_{L^2(\Gamma)}.
\end{multline*}
We gather from (M4) and~\eqref{e:yesgood} that 
\begin{multline}\label{aux601}
  \|u-\tilde u_h^\ell\|_{L^2(\Gamma)}
  \le C(c_p+h\Upsilon)h^2\Upsilon\|f\|_{L^2(\Gamma)}+Cc_p(c_p+h\Upsilon)\|f-f_h^\ell\|_{L^2(\Gamma)}  \\
+C\bigl((c_p+h \Upsilon)(1+\kappa^{(2)})h^2+h\Upsilon\bigr)h\Upsilon\|f\|_{L^2(\Gamma)}. 
\end{multline}
The triangle inequality yields 
\begin{equation}\label{aux501}
  \|u-u_h^\ell\|_{L^2(\Gamma)}
  \le\|u-\tilde u_h^\ell\|_{L^2(\Gamma)}+\frac1{|\Gamma|^{1/2}}|\int_{\Gamma} u_h^\ell\,dA|. 
\end{equation}
We now use that $\int_{\Gamma_h}u_h\,dA_h=0$ and~\eqref{incfun} to get
\begin{equation*}
\frac1{|\Gamma|^{1/2}}|\int_\Gamma u_h^\ell\,dA|
=\frac1{|\Gamma|^{1/2}}|\int_\Gamma u_h^\ell\,dA-\int_{\Gamma_h}u_h\,dA_h|
\le C\Psi_h\|u_h^\ell\|_{L^2(\Gamma)}. 
\end{equation*}
Using the triangle inequality,~\eqref{poincareinq} and~\eqref{energy} we gather that 
\begin{equation*}
  \frac1{|\Gamma|^{1/2}}|\int_\Gamma u_h^\ell\,dA|
  \le C(\Psi_h\|u_h^\ell-u\|_{L^2(\Gamma)}+\Psi_hc_p^2\|f\|_{L^2(\Gamma)}).
\end{equation*}
Finally, from~\eqref{aux501},~\eqref{suff}, and (M4) we have 
\begin{equation*}
  \|u-u_h^\ell\|_{L^2(\Gamma)}
  \le C\|u-\tilde u_h^\ell\|_{L^2(\Gamma)}+Cc_ph^2\Upsilon\|f\|_{L^2(\Gamma)}.
\end{equation*}
The result follows from this inequality and~\eqref{aux601}. 
\end{proof}

\begin{remark}
Although we only proved results for domains without boundaries, we anticipate that our analysis will carry over to surfaces with boundary.   In fact, in the next section we will provide numerical experiments for a surface $\Gamma$ with a boundary.
\end{remark}

\section{Numerical Experiments}\label{s:numerics}

We consider a simple example of a surface with a high curvature ``ridge,'' and show our adapted mesh as well as properties of the solution. 

\subsection{Surface Parameterization}

\begin{figure}
\begin{center}

\includegraphics[width=3.0in]{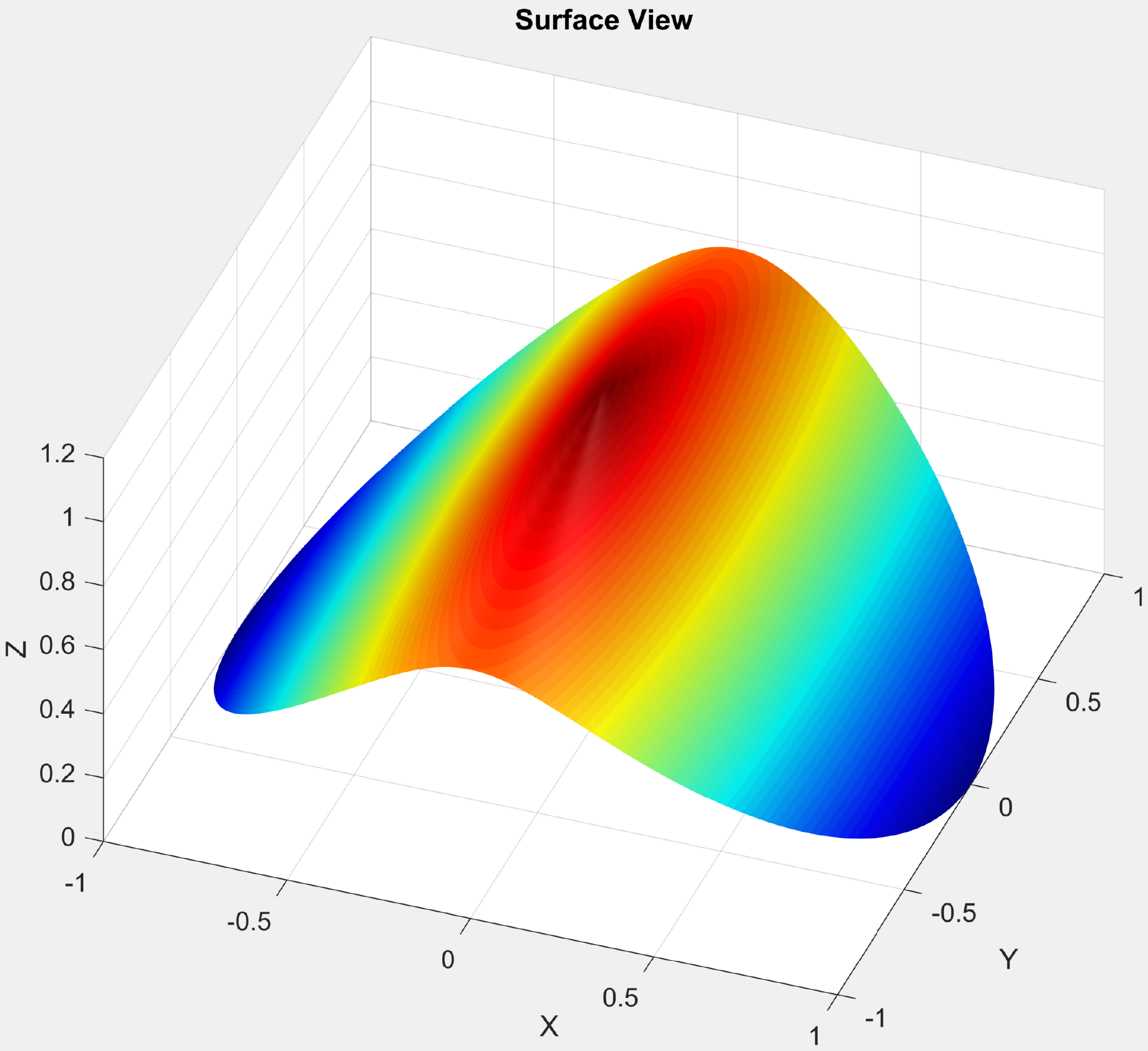}

\caption{Visualization of the surface $\Gamma$.}
\label{fig:Surface_View}
\end{center}
\end{figure}

Let $\Gamma$ be the surface parameterized by
\begin{equation}\label{eqn:surf_param}
  \bX(x,y) = \left(x,y, 1 - \sqrt{x^2 + 5 \cdot 10^{-2} y^2 + 2.5 \cdot 10^{-5} } \right), \quad \text{for } (x,y) \in U,
\end{equation}
where $U = \{ (x,y) \in \R^2 : x^2 + y^2 < 1 \}$ (see Figure \ref{fig:Surface_View}).  The curvature regions $\Gamma_1$, $\Gamma_2$ are defined by
\begin{equation}\label{eqn:domain_partition}
U_1=\biggl\{ (x,y) \in \R^2 : \left(\frac{x}{0.05}\right)^2 + \left( \frac{y}{0.5} \right)^2 \leq 1 \biggr\},\qquad\Gamma_1 = \bX(U_1),
\qquad\Gamma_2 = \Gamma \setminus \Gamma_1.
\end{equation}
This leads to the following maximum curvatures on $\Gamma_1$, $\Gamma_2$:
\begin{equation*}
  \kappa^{(1)} = \max_{\Gamma_1} \kappa = 199.970, \qquad \kappa^{(2)} = \max_{\Gamma_2} \kappa = 8.701, \qquad \frac{\kappa^{(1)}}{\kappa^{(2)}} = 22.984.
\end{equation*}

\subsection{``Exact'' Solution}
We use zero boundary conditions on $\partial \Gamma$ and choose the right-hand-side $f : \Gamma \rightarrow \R$ to be
\begin{equation}\label{eqn:RHS_comp_example}
f(x,y,z)=\begin{cases}
50.0 \exp \left( \frac{1}{ (x-0.2)^2 + y^2 - 0.2 } \right), & \text{if }  (x-0.2)^2 + y^2 < 0.2,
\\
0, & \text{else}.
\end{cases}
\end{equation}
Note that $f$ is similar to a ``bump'' function~\cite{MR1476913} and is $C^\infty(\Gamma)$ and has compact support on $\Gamma$.

In lieu of an exact analytic solution, we compute a reference ``exact'' solution (denoted $u$) on a mesh consisting of 3,679,489 vertices and 7,356,928 triangles obtained from refining an initial coarse mesh.  The number of free degrees-of-freedom of the reference solution is 3,677,441 (after eliminating boundary degrees-of-freedom).  See Figure \ref{fig:Solution} for a plot of $u$.
\begin{figure}
\begin{center}

\includegraphics[width=3.0in]{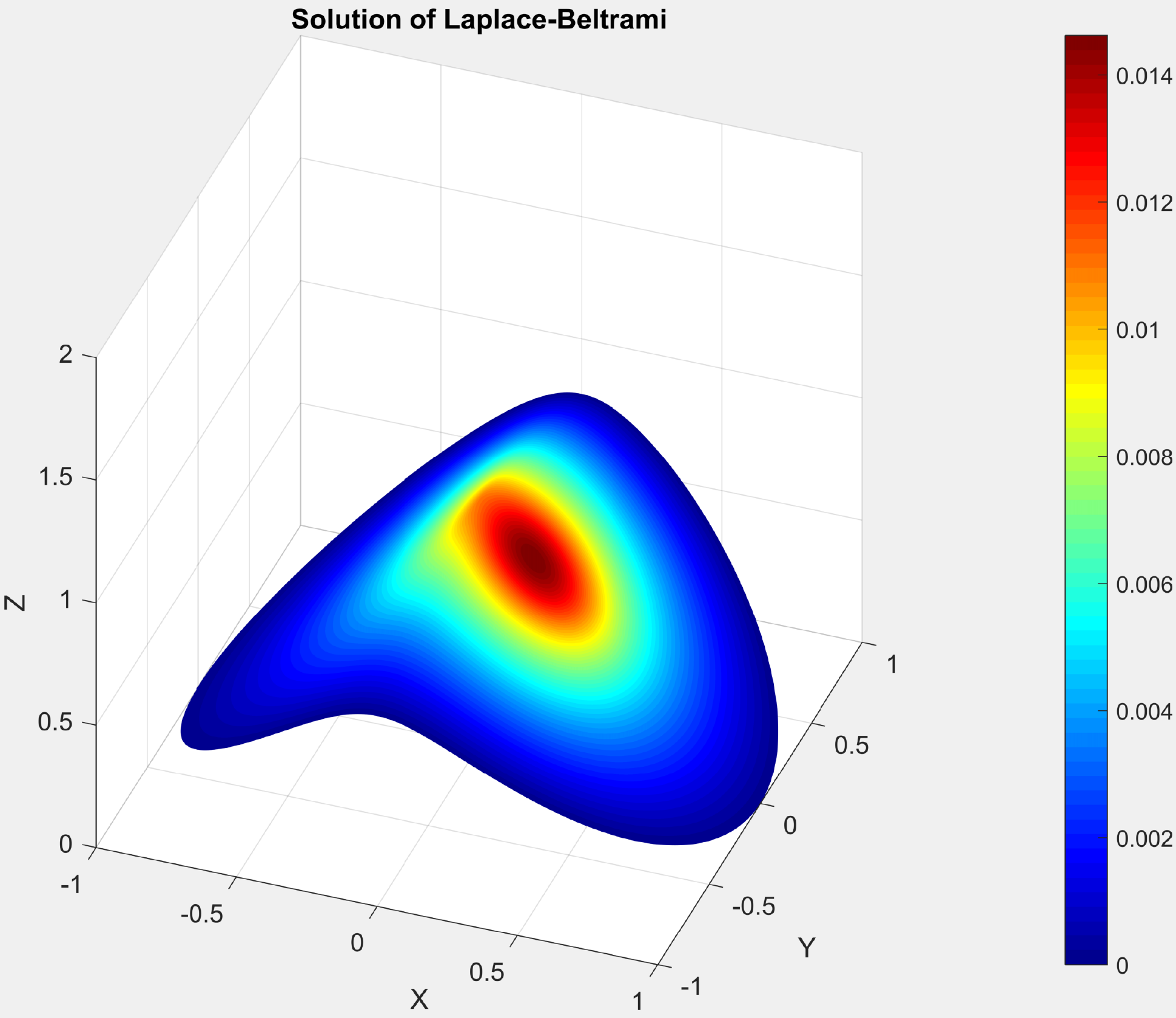}

\caption{Plot of the ``exact'' solution $u$ on $\Gamma$.}
\label{fig:Solution}
\end{center}
\end{figure}

We also plot an approximation of $|\Hessian u|$ to illustrate how the hessian is influenced by the high curvature of the domain, which is concentrated at the high curvature ridge of the surface (see Figure~\ref{fig:surf_hess_soln}).
\begin{figure}
\begin{center}

\includegraphics[width=3.0in]{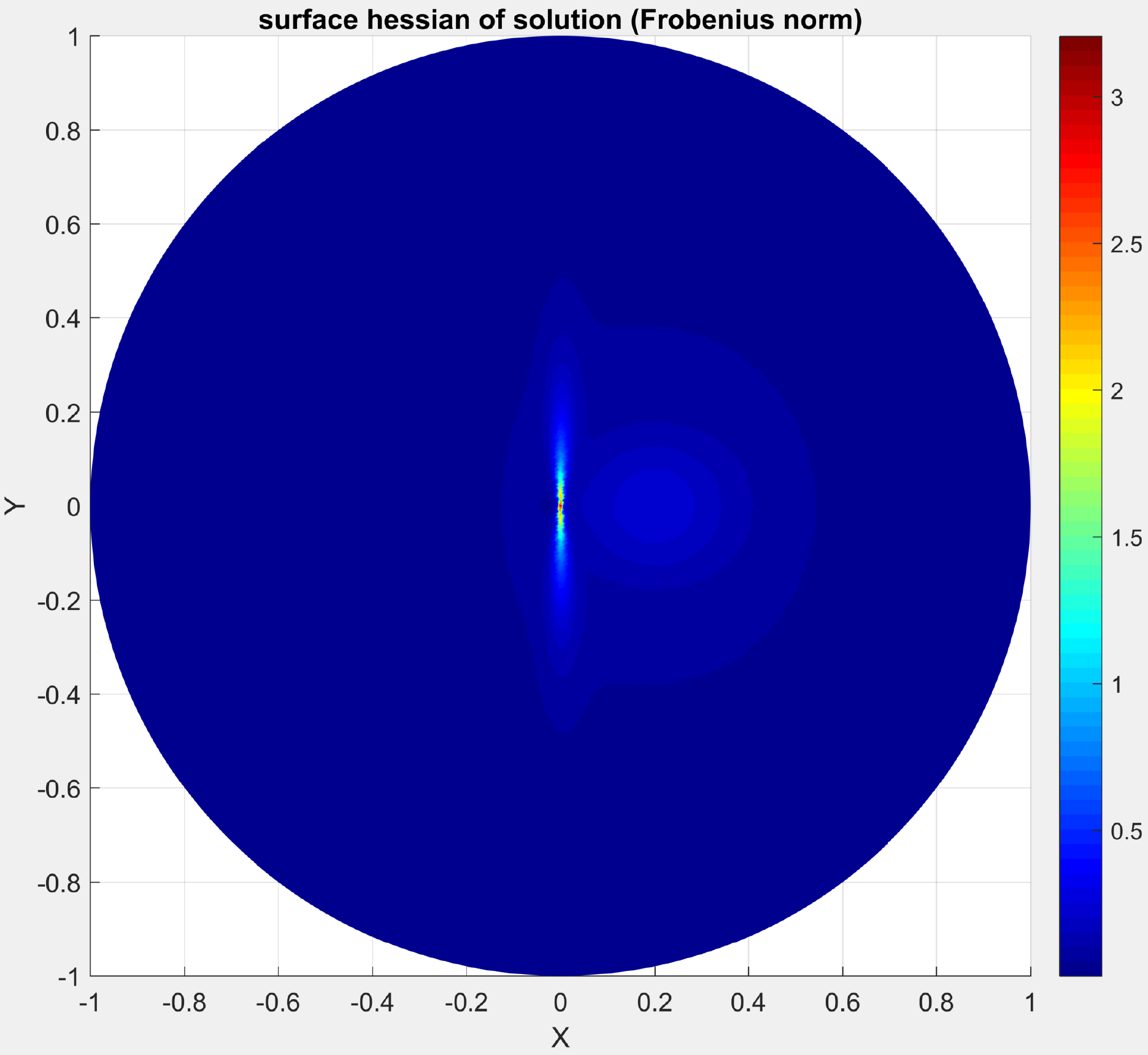}

\caption{Plot of the hessian of $u$ (viewed from the top).  Note that it peaks in the high curvature region.}
\label{fig:surf_hess_soln}
\end{center}
\end{figure}

\subsection{Adapted Mesh And Solution}

Our adapted mesh is generated by first starting with a coarse mesh that satisfies~\eqref{e:dH},~\eqref{suff},~\eqref{e:inv}.  We then iteratively check the criteria in (M1), (M2), (M3) in Section~\ref{s:graded_mesh}.  At each iteration, if any triangle does not satisfy the criteria, then it is marked for refinement.  We then refine all marked triangles using standard longest-edge bisection.  Figure \ref{fig:Adapted_Mesh} shows a plot of our final adapted mesh.
\begin{figure}
\begin{center}

\includegraphics[width=3.0in]{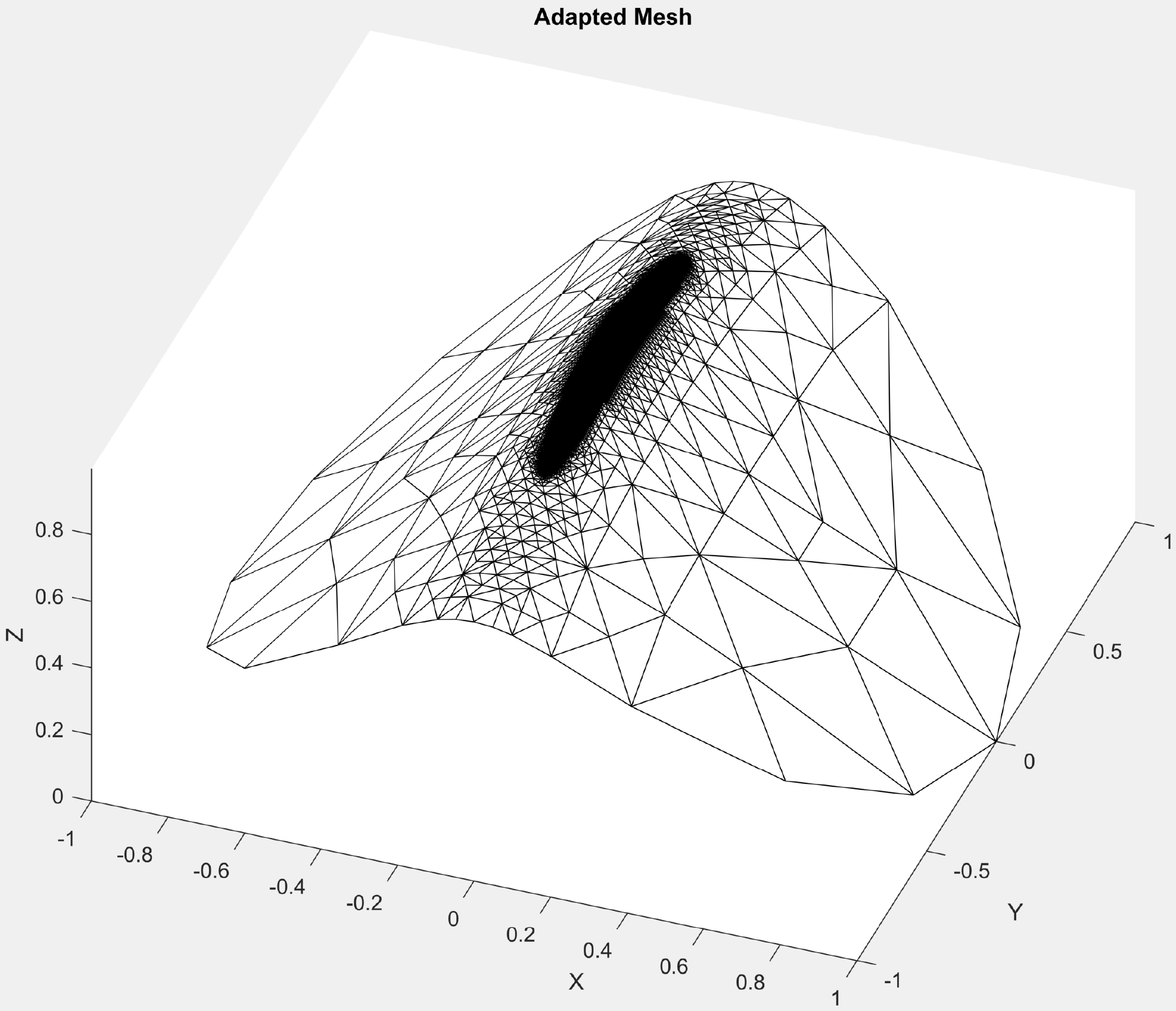}

\caption{Visualization of the adapted mesh of $\Gamma$ using our grading criteria.}
\label{fig:Adapted_Mesh}
\end{center}
\end{figure}

Figure \ref{fig:surf_grad_error} shows the ``pointwise'' error $|\bgradg (u - u_h)|$, where $u_h$ is the numerical solution on the adapted mesh.  Note that the graded mesh, essentially, eliminates the error in the high curvature region.  However, the grading strategy does not specifically account for $f$, so the error is larger where $f$ is large. 
\begin{figure}
\begin{center}

\includegraphics[width=3.0in]{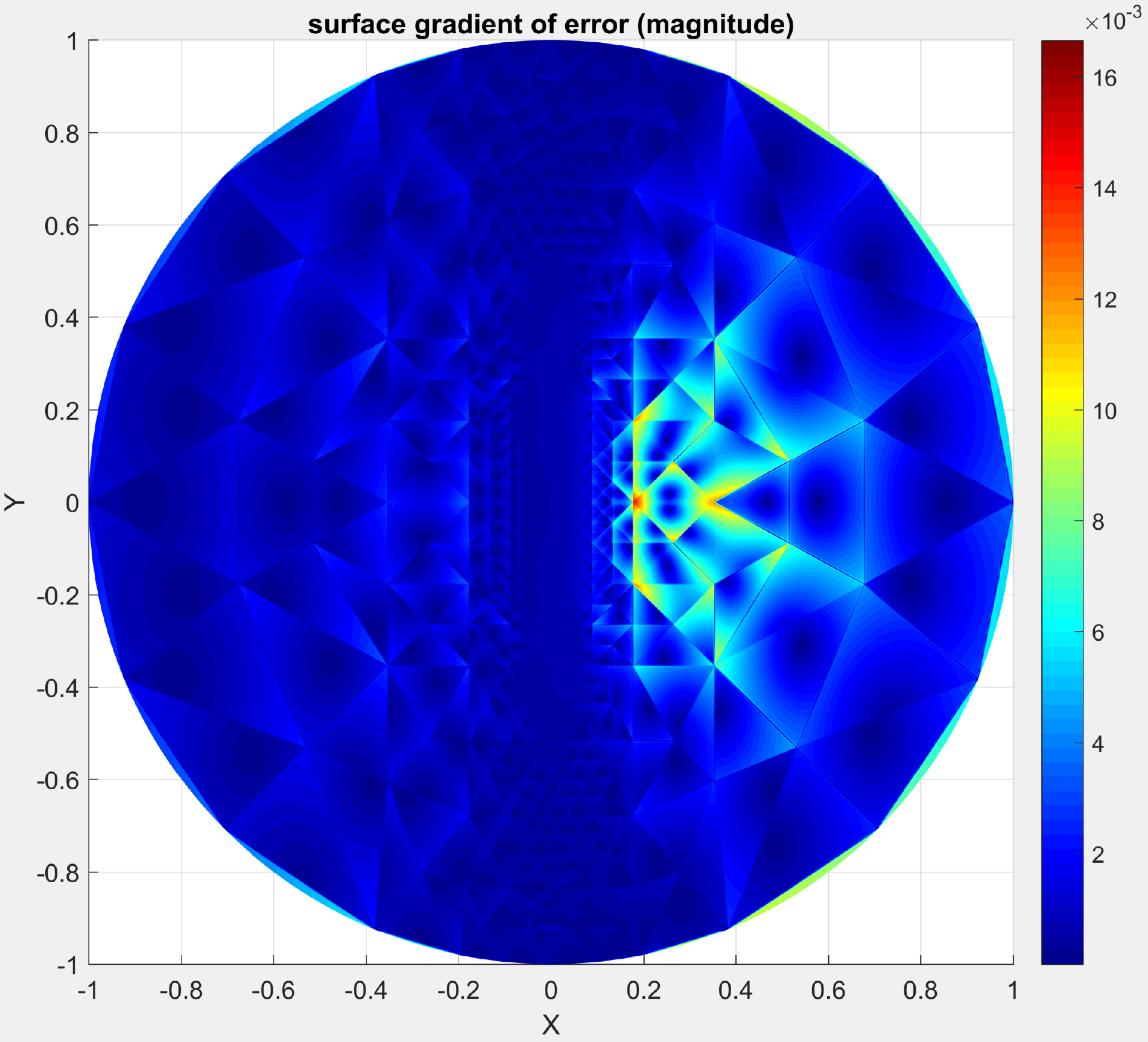}

\caption{Plot of $|\bgradg(u-u_h)|$ (viewed from the top).  The adapted mesh essentially eliminates the error in the high curvature region.}
\label{fig:surf_grad_error}
\end{center}
\end{figure}

\appendix
  \section{One technical result}

  \begin{lemma}\label{l:intbyparts}
Assume that $\Gamma$ is a $C^3$ two-dimensional compact orientable surface without boundary, and that $u\in H^2(\Gamma)$. Then 
\begin{multline*}
\sum_{i,j=1}^3\int_{\Gamma_2}\dD_{ij}u\dD_i(\rho^2\dD_ju)\,dA
\\
=\sum_{j=1}^3\int_{\Gamma_2}\lap_\Gamma u\dD_j(\rho^2\dD_ju)\,dA
-\int_{\Gamma_2}\rho^2\bigl(\tr(H)H-2H^2\bigr)\bgradg u\cdot\bgradg u\,dA. 
\end{multline*}
\end{lemma}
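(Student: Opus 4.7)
The plan is to rearrange $\dD_{ij}u$ using the commutator of tangential derivatives, integrate by parts twice to assemble $\lap_\Gamma u=\sum_i\dD_{ii}u$ on the other side, and show that all curvature contributions combine to exactly $-\rho^{2}(\tr(H)H-2H^{2})\bgradg u\cdot\bgradg u$. Since the integrand contains the factor $\rho^{2}$ (or its derivative paired with $\rho$) and $\rho$ vanishes on $\partial\Gamma_{2}$ (inherited from Lemma~\ref{l:regul}), we may extend every integral to the closed surface $\Gamma$ and use the tangential Green identity
\[
\int_{\Gamma}(\dD_{i}f)g\,dA=-\int_{\Gamma}f\,(\dD_{i}g)\,dA+\int_{\Gamma}\tr(H)\,\nu_{i}fg\,dA
\]
with no curve-boundary contribution. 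Two orthogonality facts drive all the bookkeeping: $\sum_{i}\nu_{i}\dD_{i}F=0$ for any scalar $F$ on $\Gamma$ (because $\dD_{i}$ is tangential), and $\sum_{j}\nu_{j}\dD_{j}u=0$ because $\bgradg u$ is tangent. Combined with $\dD_{i}\nu_{j}=H_{ij}$, these yield the workhorse identity
\[
\sum_{j}\nu_{j}\dD_{i}(\rho^{2}\dD_{j}u)=\dD_{i}\Bigl(\rho^{2}\sum_{j}\nu_{j}\dD_{j}u\Bigr)-\rho^{2}\sum_{j}(\dD_{i}\nu_{j})\dD_{j}u=-\rho^{2}(H\bgradg u)_{i},
\]
which is the source of essentially every curvature term below.

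The commutator of tangential derivatives is obtained by applying $\dD_{i}=P_{ik}\partial_{k}$ to a normal extension of the function. Using $\partial_{l}P_{jk}=-H_{lj}\nu_{k}-\nu_{j}H_{lk}$, the identity $PH=H$ from~\eqref{PH}, and Schwarz to cancel the symmetric second-order piece, one obtains
\[
[\dD_{i},\dD_{j}]F=\nu_{i}(H\bgradg F)_{j}-\nu_{j}(H\bgradg F)_{i}.
\]

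The calculation then proceeds in four steps. Step~1: write $\dD_{ij}u=\dD_{j}\dD_{i}u+[\dD_{i},\dD_{j}]u$; when the commutator is paired with $\dD_{i}(\rho^{2}\dD_{j}u)$ and summed, the $\nu_{i}(H\bgradg u)_{j}$ piece is killed by $\sum_{i}\nu_{i}\dD_{i}u=0$, and the $\nu_{j}(H\bgradg u)_{i}$ piece, after applying the workhorse identity, yields $+\int_{\Gamma_{2}}\rho^{2}\bgradg u\cdot H^{2}\bgradg u\,dA$. Step~2: integrate the remaining $\dD_{j}\dD_{i}u$ term by parts in $j$; the $\tr(H)\nu_{j}$ defect term, collapsed by the workhorse identity, contributes $-\int_{\Gamma_{2}}\rho^{2}\tr(H)\bgradg u\cdot H\bgradg u\,dA$. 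Step~3: inside the resulting integral commute $\dD_{j}\dD_{i}G=\dD_{i}\dD_{j}G-[\dD_{i},\dD_{j}]G$ for $G=\rho^{2}\dD_{j}u$; the commutator contribution, handled by exactly the same mechanism as Step~1, produces a second $+\int_{\Gamma_{2}}\rho^{2}\bgradg u\cdot H^{2}\bgradg u\,dA$. Step~4: integrate by parts in $i$ on the surviving term $-\sum_{i}\int\dD_{i}u\,\dD_{i}F\,dA$ with $F=\sum_{j}\dD_{j}(\rho^{2}\dD_{j}u)$; the defect now vanishes by $\sum_{i}\nu_{i}\dD_{i}u=0$, leaving exactly $\sum_{j}\int_{\Gamma_{2}}\lap_{\Gamma}u\cdot\dD_{j}(\rho^{2}\dD_{j}u)\,dA$. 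Summing the three curvature pieces recovers $-\rho^{2}(\tr(H)H-2H^{2})\bgradg u\cdot\bgradg u$.

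The delicate point is the commutator identity together with the recognition that the same workhorse identity channels both the commutator contributions of Steps~1 and~3 and the tangential-IBP defect of Step~2 into the single curvature expression. Once that formula is in hand, the remaining manipulations are systematic bookkeeping. The $H^{2}(\Gamma)$ hypothesis on $u$ is handled by first verifying the identity for $u\in C^{3}(\Gamma)$ (where all pointwise manipulations of second derivatives are justified) and then passing to the limit by density.
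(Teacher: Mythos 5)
Your argument is correct and rests on exactly the same ingredients as the paper's proof: the closed-surface integration-by-parts formula with the $\tr(H)\nu_i$ defect, the commutator identity $\dD_{ij}u=\dD_{ji}u+(H\bgradg u)_j\nu_i-(H\bgradg u)_i\nu_j$, the tangentiality facts $\bnu\cdot\bgradg F=0$ and $\bnu\cdot\bgradg u=0$, and a density argument from $C^3(\Gamma)$. The only difference is the order of the bookkeeping --- you commute first and harvest the $-\tr(H)H$ term from an integration-by-parts defect, whereas the paper integrates by parts first and harvests it from differentiating $\nu_i$ inside the commutator term --- and both routes yield the same two $+H^2$ contributions and the same final identity.
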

\begin{proof}
In what follows, we use the two identities~\cite{MR3038698}
\begin{equation}\label{app1}
\int_\Gamma\dD_iuv\,dA=-\int_\Gamma u\dD_iv\,dA+\int_\Gamma uv\tr(H)\nu_i\,dA,
\end{equation}
and
\begin{equation}\label{app2}
\dD_{ij}u=\dD_{ji}u+(H\bgradg u)_j\nu_i-(H\bgradg u)_i\nu_j,
 \end{equation}
 for all $C^3(\Gamma)$ functions $u$, $v$. 
 Also we will use that of course
 \begin{equation}\label{app3}
 \bgradg u\cdot\bnu=0.
 \end{equation}

We assume for the proof that $u\in C^3(\Gamma)$, and the general result follows from density arguments. Following~\cite{MR3038698}*{Lemma 3.2}, and using the Einstein summation convention, 
\begin{alignat*}{2}
\int_\Gamma\dD_{ij}u\dD_i(\rho^2\dD_ju)\,dA
=&-\int_\Gamma\dD_{iij}u(\rho^2\dD_ju)\,dA
+\int_\Gamma\dD_{ij}u\rho^2\dD_ju\tr(H)\nu_i\,dA  && \text{ by } \eqref{app1} \\
=&-\int_\Gamma\dD_{iij}u (\rho^2\dD_ju)\,dA\quad && \text{ by } \eqref{app3} \\ 
=&-\int_\Gamma\dD_i\bigl[\dD_{ji}u+(H\bgradg u)_j\nu_i-(H\bgradg u)_i\nu_j\bigr]\rho^2\dD_ju\,dA  \quad && \text{ by } \eqref{app2} 
\\
=&-\int_\Gamma\dD_i\bigl[\dD_{ji}u+H_{jk}\dD_ku\nu_i-H_{ik}\dD_ku\nu_j\bigr]\rho^2\dD_ju\,dA
\\
=&-\int_\Gamma\bigl(\dD_{iji}u+H_{jk}H_{ii}\dD_ku-H_{ik}H_{ij}\dD_ku\bigr)\rho^2\dD_ju\,dA && \text{ by } \eqref{app3}  \\
=&-\int_\Gamma\bigl(\dD_{iji}u \rho^2\dD_ju + \rho^2(\tr(H) H- H^2) \bgradg u \cdot \bgradg u \bigr)\,dA.
\end{alignat*}

To handle the first term on the right hand side, we use \eqref{app2} and the fact that $\dD_{ii} u= \lap_{\Gamma} u$ to write:
\begin{alignat*}{1}
-\int_\Gamma\dD_{iji}u\rho^2\dD_ju\,dA
=&-\int_\Gamma\bigl[\dD_j\lap_\Gamma u+(H\bgradg\dD_iu)_j\nu_i-(H\bgradg\dD_iu)_i\nu_j\bigr]\rho^2\dD_ju\,dA
\\
=&-\int_\Gamma\dD_j\lap_\Gamma u\rho^2\dD_ju+H_{jk}\dD_{ki}u\nu_i\rho^2\dD_ju\,dA,
\end{alignat*}
where we used \eqref{app3} in the last equation.
But $\dD_{ki}u\nu_i=\dD_k(\dD_iu\nu_i)-\dD_iuH_{ki}=-\dD_iuH_{ki}$, and then 
\begin{alignat*}{1}
-\int_\Gamma \dD_{iji} u\rho^2 \dD_ju\,dA
=&\int_\Gamma-\dD_j\lap_\Gamma u\rho^2\dD_ju+H_{jk}\dD_iuH_{ki}\rho^2\dD_ju\,dA
\\
=&\int_\Gamma \lap_\Gamma u\dD_j(\rho^2\dD_ju)+\rho^2H^2\bgradg u\cdot\bgradg u\,dA. 
\end{alignat*}
In the last equation we used~\eqref{app1} and~\eqref{app3}. This completes the proof.
\end{proof}
  

\begin{bibdiv}
\begin{biblist}

\bib{MR3338674}{article}{
   author={Antonietti, Paola F.},
   author={Dedner, Andreas},
   author={Madhavan, Pravin},
   author={Stangalino, Simone},
   author={Stinner, Bj{\"o}rn},
   author={Verani, Marco},
   title={High order discontinuous Galerkin methods for elliptic problems on
   surfaces},
   journal={SIAM J. Numer. Anal.},
   volume={53},
   date={2015},
   number={2},
   pages={1145--1171},
   issn={0036-1429},
   review={\MR{3338674}},
   doi={10.1137/140957172},
}

\bib{MR1868103}{article}{
   author={Bertalm\'\i o, Marcelo},
   author={Cheng, Li-Tien},
   author={Osher, Stanley},
   author={Sapiro, Guillermo},
   title={Variational problems and partial differential equations on
   implicit surfaces},
   journal={J. Comput. Phys.},
   volume={174},
   date={2001},
   number={2},
   pages={759--780},
   issn={0021-9991},
   review={\MR{1868103}},
   doi={10.1006/jcph.2001.6937},
}

\bib{Bonito}{article}{
  author={Bonito, Andrea},
  author={Casc\'{o}n, J. Manuel},
  author={Morin, Pedro},
  author={Mekchay, Khamron},
  author={Nochetto, Ricardo H.},
  title={High-Order AFEM for the Laplace-Beltrami Operator: Convergence Rates},
  eprint={http://arxiv.org/abs/1511.05019},
  date={2016}
}

\bib{MR3312662}{article}{
   author={Burman, Erik},
   author={Hansbo, Peter},
   author={Larson, Mats G.},
   title={A stabilized cut finite element method for partial differential
   equations on surfaces: the Laplace-Beltrami operator},
   journal={Comput. Methods Appl. Mech. Engrg.},
   volume={285},
   date={2015},
   pages={188--207},
   issn={0045-7825},
   review={\MR{3312662}},
   doi={10.1016/j.cma.2014.10.044},
}

\bib{MR0394451}{book}{
   author={do Carmo, Manfredo P.},
   title={Differential geometry of curves and surfaces},
   note={Translated from the Portuguese},
   publisher={Prentice-Hall, Inc., Englewood Cliffs, N.J.},
   date={1976},
   pages={viii+503},
   review={\MR{0394451}},
}

\bib{MR3345245}{article}{
   author={Chernyshenko, Alexey Y.},
   author={Olshanskii, Maxim A.},
   title={An adaptive octree finite element method for PDEs posed on
   surfaces},
   journal={Comput. Methods Appl. Mech. Engrg.},
   volume={291},
   date={2015},
   pages={146--172},
   issn={0045-7825},
   review={\MR{3345245}},
   doi={10.1016/j.cma.2015.03.025},
}

\bib{MR3043557}{article}{
   author={Chernyshenko, A. Y.},
   author={Olshanskii, M. A.},
   title={Non-degenerate Eulerian finite element method for solving PDEs on
   surfaces},
   journal={Russian J. Numer. Anal. Math. Modelling},
   volume={28},
   date={2013},
   number={2},
   pages={101--124},
   issn={0927-6467},
   review={\MR{3043557}},
   doi={10.1515/rnam-2013-0007},
}

\bib{MR3522964}{article}{
   author={Cockburn, Bernardo},
   author={Demlow, Alan},
   title={Hybridizable discontinuous Galerkin and mixed finite element
   methods for elliptic problems on surfaces},
   journal={Math. Comp.},
   volume={85},
   date={2016},
   number={302},
   pages={2609--2638},
   issn={0025-5718},
   review={\MR{3522964}},
   doi={10.1090/mcom/3093},
}

\bib{MR2608464}{article}{
   author={Deckelnick, Klaus},
   author={Dziuk, Gerhard},
   author={Elliott, Charles M.},
   author={Heine, Claus-Justus},
   title={An $h$-narrow band finite-element method for elliptic equations on
   implicit surfaces},
   journal={IMA J. Numer. Anal.},
   volume={30},
   date={2010},
   number={2},
   pages={351--376},
   issn={0272-4979},
   review={\MR{2608464}},
   doi={10.1093/imanum/drn049},
}

\bib{MR3447136}{article}{
   author={Dedner, Andreas},
   author={Madhavan, Pravin},
   title={Adaptive discontinuous Galerkin methods on surfaces},
   journal={Numer. Math.},
   volume={132},
   date={2016},
   number={2},
   pages={369--398},
   issn={0029-599X},
   review={\MR{3447136}},
   doi={10.1007/s00211-015-0719-4},
}

\bib{MR3081490}{article}{
   author={Dedner, Andreas},
   author={Madhavan, Pravin},
   author={Stinner, Bj{\"o}rn},
   title={Analysis of the discontinuous Galerkin method for elliptic
   problems on surfaces},
   journal={IMA J. Numer. Anal.},
   volume={33},
   date={2013},
   number={3},
   pages={952--973},
   issn={0272-4979},
   review={\MR{3081490}},
   doi={10.1093/imanum/drs033},
}

\bib{MR2485433}{article}{
   author={Demlow, Alan},
   title={Higher-order finite element methods and pointwise error estimates
   for elliptic problems on surfaces},
   journal={SIAM J. Numer. Anal.},
   volume={47},
   date={2009},
   number={2},
   pages={805--827},
   issn={0036-1429},
   review={\MR{2485433}},
   doi={10.1137/070708135},
}

\bib{MR2285862}{article}{
   author={Demlow, Alan},
   author={Dziuk, Gerhard},
   title={An adaptive finite element method for the Laplace-Beltrami
   operator on implicitly defined surfaces},
   journal={SIAM J. Numer. Anal.},
   volume={45},
   date={2007},
   number={1},
   pages={421--442 (electronic)},
   issn={0036-1429},
   review={\MR{2285862}},
   doi={10.1137/050642873},
}

\bib{MR2970758}{article}{
   author={Demlow, Alan},
   author={Olshanskii, Maxim A.},
   title={An adaptive surface finite element method based on volume meshes},
   journal={SIAM J. Numer. Anal.},
   volume={50},
   date={2012},
   number={3},
   pages={1624--1647},
   issn={0036-1429},
   review={\MR{2970758}},
   doi={10.1137/110842235},
}

\bib{MR976234}{article}{
   author={Dziuk, Gerhard},
   title={Finite elements for the Beltrami operator on arbitrary surfaces},
   conference={
      title={Partial differential equations and calculus of variations},
   },
   book={
      series={Lecture Notes in Math.},
      volume={1357},
      publisher={Springer, Berlin},
   },
   date={1988},
   pages={142--155},
   review={\MR{976234}},
   doi={10.1007/BFb0082865},
}

\bib{MR3038698}{article}{
   author={Dziuk, Gerhard},
   author={Elliott, Charles M.},
   title={Finite element methods for surface PDEs},
   journal={Acta Numer.},
   volume={22},
   date={2013},
   pages={289--396},
   issn={0962-4929},
   review={\MR{3038698}},
}

\bib{MR3053884}{article}{
   author={Efendiev, Yalchin},
   author={Galvis, Juan},
   author={Pauletti, M. Sebastian},
   title={Multiscale finite element methods for flows on rough surfaces},
   journal={Commun. Comput. Phys.},
   volume={14},
   date={2013},
   number={4},
   pages={979--1000},
   issn={1815-2406},
   review={\MR{3053884}},
}

\bib{MR2050138}{book}{
   author={Ern, Alexandre},
   author={Guermond, Jean-Luc},
   title={Theory and practice of finite elements},
   series={Applied Mathematical Sciences},
   volume={159},
   publisher={Springer-Verlag, New York},
   date={2004},
   pages={xiv+524},
   isbn={0-387-20574-8},
   review={\MR{2050138}},
   doi={10.1007/978-1-4757-4355-5},
}

\bib{MR737190}{book}{
    author={Gilbarg, David},
    author={Trudinger, Neil S.},
    title = {Elliptic partial differential equations of second order},
    series = {Grundlehren der Mathematischen Wissenschaften [Fundamental
              Principles of Mathematical Sciences]},
    volume = {224},
   edition = {Second},
 publisher = {Springer-Verlag, Berlin},
      date = {1983},
     pages = {xiii+513},
      isbn = {3-540-13025-X},
       DOI = {10.1007/978-3-642-61798-0},
       URL = {http://dx.doi.org/10.1007/978-3-642-61798-0},
}

\bib{hanslar}{article}{
    author={Hansbo, Peter},
    author={Larson, Mats G.},
    title={A Stabilized Finite Element Method for the Darcy Problem on Surfaces},
    eprint={http://arxiv.org/abs/1511.03747},
    year={2015}
}

\bib{MR3371354}{article}{
   author={Hansbo, Peter},
   author={Larson, Mats G.},
   author={Zahedi, Sara},
   title={Stabilized finite element approximation of the mean curvature
   vector on closed surfaces},
   journal={SIAM J. Numer. Anal.},
   volume={53},
   date={2015},
   number={4},
   pages={1806--1832},
   issn={0036-1429},
   review={\MR{3371354}},
   doi={10.1137/140982696},
}

\bib{MR2915563}{article}{
   author={Holst, Michael},
   author={Stern, Ari},
   title={Geometric variational crimes: Hilbert complexes, finite element
   exterior calculus, and problems on hypersurfaces},
   journal={Found. Comput. Math.},
   volume={12},
   date={2012},
   number={3},
   pages={263--293},
   issn={1615-3375},
   review={\MR{2915563}},
   doi={10.1007/s10208-012-9119-7},
}

\bib{MR1476913}{book}{
   author={Lang, Serge},
   title={Undergraduate analysis},
   series={Undergraduate Texts in Mathematics},
   edition={2},
   publisher={Springer-Verlag, New York},
   date={1997},
   pages={xvi+642},
   isbn={0-387-94841-4},
   review={\MR{1476913}},
   doi={10.1007/978-1-4757-2698-5},
}

\bib{MR1941909}{article}{
   author={Mantegazza, Carlo},
   author={Mennucci, Andrea Carlo},
   title={Hamilton-Jacobi equations and distance functions on Riemannian
   manifolds},
   journal={Appl. Math. Optim.},
   volume={47},
   date={2003},
   number={1},
   pages={1--25},
   issn={0095-4616},
   review={\MR{1941909}},
   doi={10.1007/s00245-002-0736-4},
}

\bib{MR2551197}{article}{
   author={Olshanskii, Maxim A.},
   author={Reusken, Arnold},
   author={Grande, J\"org},
   title={A finite element method for elliptic equations on surfaces},
   journal={SIAM J. Numer. Anal.},
   volume={47},
   date={2009},
   number={5},
   pages={3339--3358},
   issn={0036-1429},
   review={\MR{2551197}},
   doi={10.1137/080717602},
}

\bib{MR2570076}{article}{
   author={Olshanskii, Maxim A.},
   author={Reusken, Arnold},
   title={A finite element method for surface PDEs: matrix properties},
   journal={Numer. Math.},
   volume={114},
   date={2010},
   number={3},
   pages={491--520},
   issn={0029-599X},
   review={\MR{2570076}},
   doi={10.1007/s00211-009-0260-4},
}

\bib{MR3215065}{article}{
   author={Olshanskii, Maxim A.},
   author={Reusken, Arnold},
   author={Xu, Xianmin},
   title={An Eulerian space-time finite element method for diffusion
   problems on evolving surfaces},
   journal={SIAM J. Numer. Anal.},
   volume={52},
   date={2014},
   number={3},
   pages={1354--1377},
   issn={0036-1429},
   review={\MR{3215065}},
   doi={10.1137/130918149},
}

\bib{MR3194806}{article}{
   author={Olshanskii, Maxim A.},
   author={Reusken, Arnold},
   author={Xu, Xianmin},
   title={A stabilized finite element method for advection-diffusion
   equations on surfaces},
   journal={IMA J. Numer. Anal.},
   volume={34},
   date={2014},
   number={2},
   pages={732--758},
   issn={0272-4979},
   review={\MR{3194806}},
   doi={10.1093/imanum/drt016},
}

\bib{MR3471100}{article}{
   author={Olshanskii, Maxim A.},
   author={Safin, Danil},
   title={A narrow-band unfitted finite element method for elliptic PDEs
   posed on surfaces},
   journal={Math. Comp.},
   volume={85},
   date={2016},
   number={300},
   pages={1549--1570},
   issn={0025-5718},
   review={\MR{3471100}},
   doi={10.1090/mcom/3030},
}

\bib{MR0502065}{article}{
   author={Schatz, A. H.},
   author={Wahlbin, L. B.},
   title={Maximum norm estimates in the finite element method on plane
   polygonal domains. I},
   journal={Math. Comp.},
   volume={32},
   date={1978},
   number={141},
   pages={73--109},
   issn={0025-5718},
   review={\MR{0502065}},
}

\bib{MR0502067}{article}{
   author={Schatz, A. H.},
   author={Wahlbin, L. B.},
   title={Maximum norm estimates in the finite element method on plane
   polygonal domains. II. Refinements},
   journal={Math. Comp.},
   volume={33},
   date={1979},
   number={146},
   pages={465--492},
   issn={0025-5718},
   review={\MR{0502067}},
}

\bib{MR3486164}{book}{
   author={Walker, Shawn W.},
   title={The shapes of things},
   series={Advances in Design and Control},
   volume={28},
   note={A practical guide to differential geometry and the shape
   derivative},
   publisher={Society for Industrial and Applied Mathematics (SIAM),
   Philadelphia, PA},
   date={2015},
   pages={ix+154},
   isbn={978-1-611973-95-2},
   review={\MR{3486164}},
   doi={10.1137/1.9781611973969.ch1},
}

\end{biblist}
\end{bibdiv}

\end{document}